\setlist[itemize,enumerate,description]{itemsep=0pt,topsep=0pt}
\theoremstyle{plain}
\newtheorem{theorem}{Theorem}[section]
\newtheorem{lemma}[theorem]{Lemma}
\theoremstyle{definition}
\newtheorem{qu}{Question} 
\numberwithin{equation}{section}
\newcommand{\N}{\mathbb{N}} 
\newcommand{\C}{\mathbb{C}} 
\newcommand{\R}{\mathbb{R}} 
\DeclareMathOperator{\udens}{\overline{dens}} 
\DeclareMathOperator{\ldens}{\underline{dens}} 
\newcommand{\abs}[1]{\left| #1 \right|}
\newcommand{\norm}[1]{{\left\|#1\right\|}}
\newcommand{\harmsp}{\mathcal{H}(\mathbb{R}^N)}
\newcommand{\harmpolys}[1]{\harmpolysArgs{#1}{N}}
\newcommand{\harmpolysArgs}[2]{\mathcal{H}_{#1}(\mathbb{R}^{#2})}
\newcommand{\Rplus}{\mathbb{R}_+}
\newcommand{\eps}{\varepsilon}
\title{Rate of Growth of Distributionally Chaotic Functions}
\author{C. Gilmore}
\address{
	School of Mathematical Sciences\\
	University College Cork\\ 
	Ireland
}
\email{clifford.gilmore@ucc.ie}
\author{F. Mart\'{\i}nez-Gim\'{e}nez}
\address{
	Institut Universitari de Matemàtica Pura i Aplicada\\
	Universitat Polit\`ecnica de Val\`encia\\
	Edifici 8E, Acces F, 4a planta\\
	46022 Val\`encia \\
	Spain
}
\email{fmarting@mat.upv.es}
\author{A. Peris}
\address{
	Institut Universitari de Matemàtica Pura i Aplicada\\
	Universitat Polit\`ecnica de Val\`encia\\
	Edifici 8E, Acces F, 4a planta\\
	46022 Val\`encia \\
	Spain
}
\email{aperis@mat.upv.es}
\date{}                               
\keywords{Distributional chaos; distributionally irregular vectors; growth rates; entire functions; harmonic functions; differentiation operator; partial differentiation operators}
\subjclass{}
\subjclass[2020]{30D15, 47A16, 31B05, 47B38.}
\keywords{Distributional chaos, distributionally irregular vectors, growth rates, entire functions, harmonic functions, differentiation operator, partial differentiation operators.}
\thanks{C. Gilmore was supported by the Magnus Ehrnrooth Foundation and the Irish Research Council via a Government of Ireland Postdoctoral Fellowship. F. Mart\'{\i}nez-Gim\'{e}nez and A. Peris were supported by MCIN/AEI/10.13039/501100011033, Project PID2019-105011GB-I00, and by Generalitat Valenciana, Projects PROMETEO/2017/102 and PROMETEU/2021/070}
\begin{document}

\begin{abstract}
We investigate the permissible growth rates of functions that are distributionally chaotic with respect to differentiation operators.  We improve on the known growth estimates for $D$-distributionally chaotic entire functions, where growth is in terms of average $L^p$-norms on spheres of radius $r>0$ as $r \to \infty$, for $1 \leq p \leq \infty$. We compute  growth estimates of $\partial/ \partial x_k$-distributionally chaotic harmonic functions in terms of the average $L^2$-norm on spheres of radius $r>0$ as $r \to \infty$. We also calculate sup-norm growth estimates of distributionally chaotic harmonic functions in the case of the partial differentiation operators $D^\alpha$.
\end{abstract}

\maketitle



\section{Introduction}
\label{intro}

The term \emph{chaos} first appeared in mathematical literature in an article by Li and Yorke~\cite{LY75}, where they studied the dynamical behaviour of interval maps with period
three.   Schweizer and Sm{\'\i}tal~\cite{SS94} subsequently introduced the stronger notion of \emph{distributional chaos} for self-maps of a compact interval.

A continuous map $g \colon Y \to Y$ on a metric space $(Y,d)$ is said to be
\emph{ Li-Yorke chaotic} if there exists an uncountable set $\Gamma \subset Y$
such that for every pair $(x,y) \in \Gamma \times \Gamma$ of distinct points,
we have
\begin{equation*}
\liminf_{n \to \infty} d\left(g^n (x),\, g^n (y)\right) = 0
\ \ \mbox{ and } \ \
\limsup_{n \to \infty} d\left(g^n (x),\, g^n (y)\right) > 0.
\end{equation*}
In this case, $\Gamma$ is called a \emph{scrambled set} for $g$ and each
such pair $(x,y)$ is called a \emph{Li-Yorke pair} for $g$.
This definition captures the behaviour of orbits which are proximal without being asymptotic.

Our setting will be a Fr\'{e}chet space $X$ endowed with an increasing sequence $\left( \norm{ \; \cdot \; }_k \right)_{k \in \N}$ of seminorms that define the metric
\begin{equation*}
d(x,y) \coloneqq \sum_{k=1}^\infty 2^{-k} \frac{\norm{x-y}_k}{1+\norm{x-y}_k}
\end{equation*}
under which $X$ is complete, where $x, y \in X$.  We let $T$ denote a continuous linear operator on $X$.

The connection between Li-Yorke chaos and the linear dynamical property of irregularity was identified in \cite{BBMGP11}.
We say that $x \in X$ is an \emph{irregular vector} for $T$  if there exist $m \in \N$ and increasing sequences $(j_k)$ and $(n_k)$ of positive integers such that
\begin{equation*}
\lim_{k \to \infty} T^{j_k} x = 0  \quad \textnormal{ and } \quad \lim_{k \to \infty} \norm{ T^{n_k} x }_m = \infty.
\end{equation*}
This notion was introduced by Beauzamy~\cite{Bea88} for Banach spaces to describe the local aspects of the dynamics of pairs of vectors and it was generalised to  the Fr\'{e}chet
space setting  in \cite{BBMP15}.

We recall if there exists $x \in X$  such that its $T$-orbit is dense in $X$, that is
\begin{equation*}
\overline{\left\lbrace T^n x : n \geq 0 \right\rbrace} = X,
\end{equation*}
then $T$ is said to be \emph{hypercyclic} and such an $x \in X$ is known as a \emph{hypercyclic vector}.
It is well known that hypercyclic vectors are irregular.  Comprehensive introductions to the topic of hypercyclicity can be found in the monographs \cite{BM09} and \cite{GEP11}, and a survey of some recent advances in the area can be found in \cite{Gil20}.

A natural strengthening of Li-Yorke chaos was introduced by Schweizer and Sm\'ital \cite{SS94} with the notion of distributional chaos for interval maps. We first recall that the upper and lower densities of a set $A \subset \N$ are defined, respectively, as
\begin{align*}
\udens(A) &\coloneqq \limsup_{n \to \infty} \frac{ \left| A \cap \{ 1, 2, \dotsc , n\} \right|}{n}, \\
\ldens(A) &\coloneqq \liminf_{n \to \infty} \frac{ \left| A \cap \{ 1, 2, \dotsc , n\} \right|}{n}.
\end{align*}
For a continuous map $g \colon Y\to Y$ on a metric space $(Y,d)$, points $x ,y \in Y$ and
$\delta > 0$, we define
\begin{align*}
F_{x,y}(\delta) &\coloneqq \ldens(\{n \in \N \, : \, d\left(g^n (x),\, g^n (y)\right) < \delta\})
\shortintertext{and}
F_{x,y}^*(\delta) &\coloneqq \udens(\{n \in \N \, : \, d\left(g^n (x),\, g^n (y)\right) < \delta\}).
\end{align*}
If the pair $(x,y)$ satisfy $F^*_{x,y} \equiv 1$ and $F_{x,y}(\eps) = 0$
for some $\eps > 0$,
then $(x,y)$ is called a \emph{distributionally chaotic pair}. The map $g$ is said to be
\emph{distributionally chaotic}  if there exists an
uncountable set $\Gamma \subset Y$ such that every distinct pair $(x,y) \in \Gamma \times \Gamma$ is a distributionally chaotic pair  for $g$.

The study of distributional chaos in the linear dynamical setting was initiated in \cite{MGOP09} and it is intrinsically connected to the following properties.
The $T$-orbit of $x \in X$ is said to be \emph{distributionally near to 0}  if there exists $A \subset \N$ with $\udens(A) = 1$  such that
\begin{equation*}
\lim_{n \in A} T^n x = 0.
\end{equation*}
We say $x \in X$ has a \emph{distributionally unbounded orbit} if there exist $m \in \N$ and $B \subset \N$ with $\udens(B) = 1$  such that
\begin{equation*}
 \lim_{n \in B} \norm{ T^n x }_m = \infty.
\end{equation*}
Combining these properties, $x \in X$ is defined to be a \emph{distributionally irregular vector} for $T$ if its orbit is both distributionally unbounded and distributionally near
to 0.  This strengthening of irregularity was introduced in \cite{BBMGP11}.

In the Fr\'{e}chet space setting, it follows from results in \cite{BBMGP11} and \cite{BBMP13} that $T$ admits a distributionally irregular vector if and only if  $T$ is
distributionally chaotic. Hence, in the sequel our study focuses on distributionally irregular vectors.
Distributional chaos has been investigated from many aspects, for instance in \cite{ABMP13,BR15,BBMP15,BBPW18,CKMM16,MGOP13,WC16,WWC17,WZ12a,YY18,BBP20}.

An example of a map that admits a distributionally irregular vector is the  differentiation operator $D \colon f \mapsto f'$, acting on the  space $H(\C)$ of entire
functions on $\C$ (this follows from \cite[Corollary 17]{BBMP13}).
Bernal and Bonilla~\cite{BGB16} computed growth estimates for $D$-irregular and $D$-distributionally irregular entire functions, where growth is in terms of average $L^p$-norms, for
$1 \leq p \leq \infty$, on spheres of radius $r>0$ as $r \to \infty$.

We note that permissible growth rates of $D$-hypercyclic and $D$-frequently hypercyclic entire functions have previously been investigated in
\cite{BBGE10,BB13,DS12,DR84,Gro90,Mac52,Nik14,Shk93}.  We recall the notion of frequent hypercyclicity was introduced by Bayart and Grivaux~\cite{BG06}, where they defined
$T \colon X \to X$ to be \emph{frequently hypercyclic} if there exists $x \in X$ such that for any nonempty open subset $U \subset X$ it holds that
\begin{equation*}
\ldens \left( \left\{ n : T^n x \in U \right\}\right)  > 0.
\end{equation*}
Such an $x \in X$ is  called a \emph{frequently hypercyclic vector} for $T$.

In the setting of the space $ \harmsp$ of  harmonic functions  on $\R^N$, for $N \geq 2$, Aldred and Armitage~\cite{AA98a} considered the linear dynamical properties of partial differentiation
operators
\begin{equation*}
\frac{\partial}{\partial x_k} \colon \harmsp \to \harmsp,
\end{equation*}
where  $1 \leq k \leq N$.
They identified sharp $L^2$-growth rates, on spheres of radius $r>0$ as $r \to \infty$, of harmonic functions that are  universal (and hence hypercyclic) with respect to $\partial/
\partial x_k$.
Growth estimates in the frequently hypercyclic case were computed by Blasco et al.~\cite{BBGE10}   and sharp growth rates were subsequently identified in \cite{GST19}.

Growth estimates with respect to the  sup-norm, on spheres of radius $r>0$ as $r \to \infty$,  were computed by Aldred and Armitage~\cite{AA98b} for harmonic functions that are
universal (and hence hypercyclic) for general partial differentiation operators
\begin{equation*}
D^\alpha = \frac{\partial^{\abs{\alpha}}}{\partial x_1^{\alpha_1} \cdots \partial x_N^{\alpha_N}}
\end{equation*}
where $\alpha = (\alpha_1, \dotsc, \alpha_N) \in \N^N$ and $|\alpha| = \alpha_1 + \cdots + \alpha_N$.
 The frequently hypercyclic case was subsequently investigated by Blasco et al.~\cite{BBGE10}.

In this article we compute permissible growth rates of irregular and distributionally irregular functions.
In Section \ref{sec:growthEntireFns} we improve the growth estimates from \cite{BGB16}  for distributionally irregular entire functions and we also provide lower estimates.  In
Section \ref{sec:growthHarmonicFnsI} we investigate average $L^2$-growth estimates of irregular and distributionally irregular harmonic functions with respect to partial
differentiation operators $\partial/ \partial x_k$.  Then in Section \ref{sec:growthHarmonicFnsII} we compute sup-norm growth rates of distributionally irregular harmonic functions
in the case of the partial differentiation operators $D^\alpha$.

\section{Preliminaries}

In this section we collect some results that are required in the sequel.

We recall that absolutely Ces\`{a}ro bounded operators cannot be distributionally irregular.
For a Banach space $X$, the continuous linear operator $T \colon X \to X$ is said to be \emph{absolutely Ces\`{a}ro bounded} if there exists a constant $C>0$ such that
\begin{equation*}
\sup_{N \in \N} \frac{1}{N}  \sum_{j=1}^N \norm{T^j x} \leq C \norm{x}
\end{equation*}
for all $x \in X$.  If the orbit of $x \in X$ is distributionally unbounded, then it was proven in \cite[Proposition 20]{BBPW18}  that
\begin{equation*}
\limsup_{N \to \infty} \frac{1}{N} \sum_{j=0}^N \norm{T^j x} = \infty.
\end{equation*}

The following estimate will be needed, it can be found in \cite[Lemma 2.2]{BBGE10}.
\begin{lemma}\label{lma:BarnesEstimate}
	Let $0 < \alpha \leq 2$ and $\beta \in \R$. Then there exists some constant $C>0$ such that, for all $r>0$,
	\begin{equation*}
		\sum_{n=0}^\infty \frac{r^{\alpha n}}{(n+1)^\beta n!^\alpha} \leq C \frac{e^{\alpha r}}{r^{(\alpha + 2\beta -1)/2}}.
	\end{equation*}
\end{lemma}

We will need a technical result whose proof follows the argument of \cite[Theorem 2.4]{BBGE10}.

\begin{lemma}\label{keyl}
Let $\alpha,\beta>0$. Then there exists some $M>0$ such that, for any non-negative sequence  $(x_n)$,
\begin{equation}\label{boundser}
 \sup_{m\geq 1} \frac{1}{m} \sum_{n=1}^mx_n \leq M \sup_{R>0}    \sum_{n=1}^\infty x_n \frac{R^{\alpha n+\beta}e^{-\alpha R}}{n!^\alpha n^{\beta-\alpha/2+1/2}} .
\end{equation}
\end{lemma}

\begin{proof}
We consider the functions
\[
g_n(R) \coloneqq \frac{R^{\alpha n+\beta}e^{-\alpha R}}{n!^\alpha n^{\beta-\alpha/2+1/2}}, \ \ R>0, \ \ n\in\N .
\]
The function $g_n$ attains its maximum at $a_n \coloneqq n+\beta/\alpha$. Moreover by Stirling's formula
\[
g_n(a_n) \coloneqq \frac{(n+\beta/\alpha)^{\alpha n+\beta}e^{-\alpha {(n+\beta/\alpha)}}}{n!^\alpha n^\beta n^{-\alpha/2}n^{1/2}} \sim \frac{1}{\sqrt{n}}.
\]
The function $g_n$ has an inflection point at $b_n \coloneqq a_n+\sqrt{n/\alpha +\beta/\alpha^2}$ and, since $\lim_{R\to\infty}g_n(R)=0$, we have that $g_n(R)\geq h_n(R)$ for each $R\in I_n = [a_n,b_n]$, where $h_n$ is the affine map such that $h_n(a_n)=g_n(a_n)$ and $h_n(b_n)=0$. We fix $m_0\in\N$ such that
\[
m<a_n=n+\beta/\alpha<b_n=a_n+\sqrt{n/\alpha+\beta/\alpha^2}<3m
\]
for $m\geq m_0$ and for $n=m,\dots ,2m$. We have that
\begin{align*}
 2m\left( \sup_{R>0}    \sum_{n=1}^\infty x_ng_n(R)\right)
 &\geq \int_{[m,3m]} \left( \sum_{n=m}^{2m} x_ng_n(s)\right)ds
 \geq \sum_{n=m}^{2m} x_n  \int_{I_n} g_n(s)ds\\
 &\geq \sum_{n=m}^{2m} x_n \int_{I_n} h_n(s)ds
 \geq C' \sum_{n=m}^{2m} x_n \frac{\sqrt{n/\alpha+\beta/\alpha^2}}{\sqrt{n}}
 \geq \frac{C'}{\sqrt{\alpha}} \sum_{n=m}^{2m} x_n
\end{align*}
for $m\geq m_0$, where $C'>0$ is a constant independent of $m$ and $(x_n)$. Therefore we find $M_1>0$ with
\[
\frac{1}{m} \sum_{n=m}^{2m} x_n \leq M_1 \sup_{R>0}    \sum_{n=1}^\infty x_n \frac{R^{\alpha n+\beta}e^{-\alpha R}}{n!^\alpha n^{\beta-\alpha/2+1/2}}, \ \ \forall m\geq m_0.
\]
On the other hand,
\[
\sum_{n=m_0}^m \left( \frac{1}{n} \sum_{j=n+1}^{2n} x_j\right) \geq \sum_{j=m_0+1}^m x_j\left( \sum_{\max\{m_0,j/2\} \leq n<j} \frac{1}{n} \right)
\geq \frac{1}{m_0} \sum_{j=m_0+1}^m x_j,
\]
so we get
\begin{align*}
\frac{1}{m} \left( \sum_{j=m_0+1}^m x_j\right) &\leq \frac{m_0}{m} \sum_{n=m_0}^m \left( \frac{1}{n} \sum_{j=n+1}^{2n} x_j\right) \\
&\leq m_0M_1 \sup_{R>0}    \sum_{n=1}^\infty x_n \frac{R^{\alpha n+\beta}e^{-\alpha R}}{n!^\alpha n^{\beta-\alpha/2+1/2}}
\end{align*}

for all $m\geq m_0$, and we find $M>0$ independent of  $(x_n)$ with
\[
\sup_{m\geq 1} \frac{1}{m} \sum_{n=1}^mx_n \leq M \sup_{R>0}    \sum_{n=1}^\infty x_n \frac{R^{\alpha n+\beta}e^{-\alpha R}}{n!^\alpha n^{\beta-\alpha/2+1/2}} .
\]
\end{proof}

The following is a slight improvement of a result that is a consequence of Proposition~7 and Theorems~15 and 19 in \cite{BBMP13}.  For the
convenience of the reader we outline the main steps of the proof.

\begin{theorem}[Bernardes et al.~\cite{BBMP13}] \label{thm:sufficient}
	Let $X$ be a Fr\'{e}chet space, let $T \colon X \to X$ be a continuous linear operator, and assume that $Y$ is a separable Fr\'{e}chet space that is continuously
embedded in $X$. Suppose that:
	\begin{enumerate}[label=(\alph*)]
		\item There exists a dense subset $Y_0$ of $Y$ with $T^n(Y_0)\subset Y$ for each $n\in\N$ and
		$\lim_{n \to \infty} T^ny = 0$ in $Y$ for all $y \in Y_0$.  \label{item:suffA}

		\item There exist a subset $Y_1$ of $Y$, a map $S \colon Y_1 \to Y_1$
		with $TSy = y$ on $Y_1$, and a vector $z \in Y_1 \setminus \{0\}$ such that
		$\sum_{n=1}^\infty T^nz$ and $\sum_{n=1}^\infty S^nz$
		converge unconditionally in $X$ and $Y$, respectively.  \label{item:suffB}
	\end{enumerate}
	Then there exists a dense subset of vectors in $Y$ which are distributionally irregular for $T \colon X \to X$.
\end{theorem}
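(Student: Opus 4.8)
The plan is to follow the scheme of \cite{BBMP13} (Proposition~7 and Theorems~15 and~19); the only genuinely new feature here is that the two hypotheses involve the two spaces $X$ and $Y$ rather than a single one, and this is handled simply by keeping track throughout of which seminorm system is in use, convergence in $X$ being stronger than convergence in $Y$. We work in the Fr\'echet space $X$, which is a Baire space, and consider the set $\mathcal A$ of those $x\in X$ whose $T$-orbit is distributionally near $0$, and the set $\mathcal B$ of those $x\in X$ for which $\|T^nx\|_{m_0}\to\infty$ along a set of upper density $1$, where $m_0$ is a fixed seminorm index (chosen so that the perturbation constructed below has the required growth). A routine reformulation of the definitions shows that $\mathcal A$ and $\mathcal B$ are $G_\delta$ subsets of $X$; for instance
\[
\mathcal A=\bigcap_{\ell\in\N}\ \bigcup_{N\in\N}\ \Bigl\{x\in X:\ \abs{\bigl\{\,n\le N:\ \|T^nx\|_\ell<\tfrac1\ell\,\bigr\}}>\bigl(1-\tfrac1\ell\bigr)N\Bigr\},
\]
each inner set being open because $x\mapsto\|T^nx\|_\ell$ is continuous on $X$; $\mathcal B$ admits the analogous description with $\|T^nx\|_\ell<\tfrac1\ell$ replaced by $\|T^nx\|_{m_0}>\ell$. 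Since $\mathcal A\cap\mathcal B$ consists of vectors whose orbit is both distributionally near $0$ and (for the seminorm $m_0$) distributionally unbounded, hence of distributionally irregular vectors for $T$, it suffices by the Baire category theorem to prove that $\mathcal A$ and $\mathcal B$ are each dense in $X$.

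The density of $\mathcal A$ uses only condition~(a): every $x\in X_0$ satisfies $T^nx\to0$ in $X$, hence $\|T^nx\|_\ell\to0$ for each $\ell$, so the orbit of $x$ is distributionally near $0$ (in fact along all of $\N$); thus $X_0\subseteq\mathcal A$ and $\mathcal A$ is dense because $X_0$ is. The density of $\mathcal B$ is the substantive step and uses condition~(b). Given $y_0\in X$ and $\eps>0$, we pick $y\in X_0$ with $d(y,y_0)<\eps/2$; since $\|T^ny\|_{m_0}\to0$, it is enough to produce $w\in X$ with $d(w,0)<\eps/2$ whose orbit satisfies $\|T^nw\|_{m_0}\to\infty$ along a set of upper density $1$, because then $x:=y+w$ is within $\eps$ of $y_0$ and, from $\|T^nx\|_{m_0}\ge\|T^nw\|_{m_0}-\|T^ny\|_{m_0}$, lies in $\mathcal B$. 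Such a $w$ is built from the data of condition~(b) in the form $w=\sum_j\lambda_jS^{n_j}z$ with scalars $\lambda_j\uparrow\infty$ and a sufficiently rapidly increasing sequence $(n_j)$. The unconditional convergence of $\sum_nS^nz$ in $X$ guarantees, provided $n_1$ is large and the $n_j$ sparse enough, both that $w\in X$ with $d(w,0)<\eps/2$ and that the ``forward'' cross terms ($k>j$, for which $T^nS^{n_k}z=S^{n_k-n}z$) are negligible in every $X$-seminorm when $n$ is near $n_j$, while the unconditional convergence of $\sum_nT^nz$ in $Y$ makes the ``backward'' cross terms ($k<j$, for which $T^nS^{n_k}z=T^{n-n_k}z$) negligible in $Y$. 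Consequently, for $n$ near $n_j$ the orbit of $w$ is essentially $\lambda_j$ times a short window of the two-sided orbit $\dots,Sz,z,Tz,\dots$ of $z$; choosing $\lambda_j$ large relative to the decay of that orbit and the gaps $n_{j+1}-n_j$ appropriately, one makes $\|T^nw\|_{m_0}$ exceed any prescribed level on a block of integers around $n_j$ whose length tends to infinity, and then a standard choice of the parameters makes the union of these blocks have upper density $1$.

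The main obstacle is precisely this last recursive construction, in which three competing demands must be reconciled: $w$ must be a small perturbation (forcing the $n_j$ sparse and $\|S^{n_j}z\|$ small), all cross terms must be negligible (again forcing sparseness, now weighed against the growing $\lambda_j$), and yet the index set on which $\|T^nw\|_{m_0}$ is large must have upper density $1$ (which pulls in the opposite direction). This balancing, together with the bookkeeping required to pass between the seminorm systems of $X$ and of $Y$, is carried out in \cite{BBMP13}, to which we refer for the remaining details.
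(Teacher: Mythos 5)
Your global strategy (Baire category, with $\mathcal{A}$ handled by condition (a) and $\mathcal{B}$ the delicate part) is sound and is indeed what underlies [BBMP13, Theorem~15], which the paper invokes as a black box. But the explicit construction you propose for the density of $\mathcal{B}$ has a genuine gap, and it is not the construction carried out in [BBMP13], so the referral there does not repair it. With $w=\sum_j\lambda_j S^{n_j}z$ you need, near $n_j$, that $\lambda_j\|S^rz\|_{m_0}$ exceed a threshold for $0\le r\le C_j$ (window length $C_j$), while $\lambda_j\|S^rz\|$ must be negligible for $r\ge C_j+g_j$ ($g_j$ the gap to the previous block) so that forward cross terms and the convergence of $w$ itself are controlled; upper density $1$ forces $g_j=o(C_j)$. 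Unconditional convergence of $\sum_n S^nz$ gives only $\|S^nz\|\to 0$, with no control on the rate, and the ratio $\min_{r\le C}\|S^rz\|_{m_0}\,/\,\sup_{r\ge C+g}\|S^rz\|$ can remain bounded as $C\to\infty$ whenever $g=o(C)$ (for instance $\|S^nz\|\asymp n^{-2}$, which the hypotheses permit). In that case the three demands cannot be met simultaneously, and the blocks you describe cannot have length comparable to $n_j$; the claimed ``standard choice of parameters'' does not exist.

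The paper's proof is structurally different precisely to bypass this obstruction. Following [BBMP13, Theorem~19], it uses the two series in condition (b) to produce a \emph{non-zero periodic point} $w_{k_0}=\sum_{n\ge1}T^{k_0n}z+z+\sum_{n\ge1}S^{k_0n}z$ with $T^{k_0}w_{k_0}=w_{k_0}$, together with $y_k:=\sum_{n\ge k}S^{k_0n}z\to 0$ whose orbit accumulates at the \emph{finite} cycle $\{T^\ell w_{k_0}:0\le\ell<k_0\}$. The threshold $\varepsilon=\tfrac12\min_\ell\|T^\ell w_{k_0}\|_m$ is then a fixed positive number, independent of how far out one looks, so $\|T^jy_k\|_m>\varepsilon$ for all sufficiently large $j$ and the upper-density-one condition is immediate rather than something to be fought for. [BBMP13, Proposition~7] converts this ``distributionally unbounded criterion'' into an actual distributionally unbounded vector, and [BBMP13, Theorem~15] together with condition (a) then supplies the dense set of distributionally irregular vectors in $X$. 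The periodic-point device, which replaces the decaying sequence $\{S^rz\}_r$ by a finite cycle of non-zero vectors, is exactly the idea your sketch is missing.
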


\begin{proof}
	We let $\left( \norm{ \; \cdot \; }_k \right)_{k \in \N}$ denote an increasing fundamental sequence of seminorms on $X$, and without loss of generality we assume it satisfies
$$
\norm{Tx}_k \leq \norm{x}_{k+1} \  \mbox{ for all } x \in X \mbox{ and }
 k \in \N.
$$
We  fix another increasing fundamental sequence $\left( \norm{ \; \cdot \; }_k' \right)_{k \in \N}$ of seminorms in $Y$, with $\norm{y}_k\leq \norm{y}_k'$ for every $y\in Y$.

	Arguing as in \cite[Theorem 19]{BBMP13}, we define $w_{k_0} \coloneqq \sum_{n=1}^\infty T^{k_0n}z + z + \sum_{n=1}^\infty S^{k_0n}z$, where $w_{k_0} \neq 0$ if $k_0$ is
sufficiently large and $T^{k_0}w_{k_0} = w_{k_0}$.  Let $y_k \coloneqq \sum_{n=k}^\infty S^{k_0n}z\in Y$, $k\in\N$.  Then $y_k \to 0$ in $Y$ and
	\begin{equation*}
	T^{k_0j}y_k = \sum_{n=1}^{j-k} T^{k_0n}z + z + \sum_{n=1}^\infty S^{k_0n}z \to w_{k_0}
	\end{equation*}
	in $X$ as $j \to \infty$.  For $0 \leq \ell < k_0$ we have
	\begin{equation*}
	\lim_{j \to \infty} T^{\ell + k_0j} y_k = T^\ell w_{k_0} \ \mbox{ in } X
	\end{equation*}
	and hence $\{ T^\ell w_{k_0} : 0 \leq \ell < k_0 \}$ are accumulation points of the orbit of $y_k$.

	Let $m\in\N$ with $\norm{T^\ell w_{k_0}}_m\neq 0$, $\ell=0,\dots ,k_0-1$. We define
	\begin{equation*}
	\varepsilon \coloneqq \frac{1}{2} \min \left\lbrace \norm{ \; T^\ell w_{k_0}  \; }_m \ : \ 0 \leq \ell < k_0 \right\rbrace >0.
	\end{equation*}
	Then there exists an increasing sequence $(N_k)$ of positive integers such that
	\begin{equation}\label{unbY}
	\lim_{k\to \infty} \frac{1}{N_k} \abs{\left\lbrace 1 \leq j \leq N_k :  \norm{ \; T^j y_k  \; }_m > \varepsilon \right\rbrace } = 1.
	\end{equation}
	Without loss of generality we will assume that $m=1$. We adapt now the proof of \cite[Theorem 15]{BBMP13} to show that there exists a dense subset of vectors in $Y$ which are distributionally irregular for $T \colon X \to X$. Indeed,  by~\ref{unbY} and since the sequences $(y_k)$ and $(T^iu)$, $u\in Y_0$, converge to $0$ in $Y$, we can construct inductively a sequence $(x_k)$ of vectors in $Y_0$
with $\norm{x_k}'_k \leq 1$, $k \in \N$, and an increasing sequence $(n_k)$
of positive integers such that
\begin{align}
 &  \abs{\left\lbrace 1 \leq i \leq n_k : \|T^ix_k\|_1 > k2^k \right\rbrace } >
  n_k \left(1 - \frac{1}{k^2}\right), \label{eq1}
 \\
 & \abs{\left\lbrace 1 \leq i \leq n_k : \|T^ix_s\|'_k < \frac{1}{k} \right\rbrace } >
  n_k \left(1 - \frac{1}{k^2}\right), \ s = 1,\dots,k-1. \label{eq2}
\end{align}

We consider an increasing sequence
$(r_j)$ of positive integers such that
\begin{equation}
r_{j+1} \geq 1 + r_j + n_{r_j + 1} \ \ \text{ for all } j \in \N. \label{eq3}
\end{equation}
We fix $\alpha \in \{0,1\}^\N$ defined by $\alpha_n = 1$ if and only if
$n = r_j$ for some $j \in \N$, and we define the vector
\[
 u := \sum_i \frac{\alpha_i}{2^i}\, x_i
          = \sum_j \frac{\alpha_{r_j}}{2^{r_j}}\, x_{r_j}
\]
{with $u \in Y$} by the fact that $\norm{x_k}'_k \leq 1$, $k \in \N$. The argument of the proof of \cite[Theorem 15]{BBMP13} yields that $u\in Y$ is a distributionally irregular vector for $T \colon X \to X$ since $\norm{y}_k\leq \norm{y}_k'$ for every $y\in Y$.

Finally, the set $u+Y_0$ forms a dense subset of vectors in $Y$ which are distributionally irregular for $T \colon X \to X$, as desired.
\end{proof}

\section{Growth of Distributionally Irregular Entire Functions}  \label{sec:growthEntireFns}

In this section we consider growth of entire functions that are distributionally irregular with respect to the  differentiation operator $D$ acting on the space $H(\C)$ of entire functions.  We first introduce the pertinent definitions that allow us to precisely specify how growth is measured.

For an entire function $f \in H(\C)$ and $1 \leq p < \infty$, the average $L^p$-norm is defined as
\begin{equation*}
M_p (f, r) = \left( \frac{1}{2 \pi} \int_{0}^{2\pi} \lvert f(re^{it}) \rvert^p \; dt \right)^{1/p}
\end{equation*}
where $r > 0$ and we denote the sup-norm of $f$ by
\begin{equation*}
M_\infty (f,r)  = \sup_{\abs{z} = r} \abs{f(z)}, \quad r > 0.
\end{equation*}

Optimal growth rates of $D$-hypercyclic entire functions were identified by Grosse-Erdmann~\cite{Gro90} and Shkarin~\cite{Shk93}. They proved if $\varphi \colon \R_+ \to \R_+$ is a function such that $\varphi(r) \to \infty$ as $r \to \infty$, then there exists a $D$-hypercyclic entire function $f \in H(\C)$ such that
\begin{equation} \label{entireCriticalGrowth}
M_\infty (f,r) \leq \varphi(r) \frac{e^r}{\sqrt{r}}
\end{equation}
for $r$ sufficiently large. This growth is optimal, since for the critical rate of $e^r/\sqrt{r}$, there does not exist a $D$-hypercyclic entire function $f \in H(\C)$ such that
\begin{equation} \label{entireCriticalGrowthLower}
M_\infty (f,r)\leq c \frac{e^r}{\sqrt{r}}, \quad \textrm{for }  r >0,
\end{equation}
where $c>0$ is a constant. As noted by Blasco et al.~\cite[Theorem 2.1]{BBGE10}, the above growth results extend to $M_p$-averages for all $1 \leq p \leq \infty$.

As expected, $D$-frequently hypercyclic entire functions must grow faster than in the hypercyclic case.  Permissible growth  of $D$-frequently hypercyclic entire functions was investigated in \cite{BBGE10}, and optimal growth was identified by Drasin and Saksman~\cite{DS12}. In \cite{DS12} they proved for any constant $C >0$, that there exists a $D$-frequently hypercyclic function $f \in H(\C)$  such that
	\begin{equation*}
	M_\infty (f,r) \leq C \frac{e^r}{r^{1/4}}, \quad \textrm{ for all } r >0.
	\end{equation*}
The above growth result naturally applies to $M_p$-averages for $1 < p \leq \infty$. However, we note that in the case $p=1$, Bonet and Bonilla~\cite{BB13} had previously identified that there exists a $D$-frequently hypercyclic entire function $f \in H(\C)$ with
\begin{equation*}
	M_1(f,r) \leq \varphi(r) \frac{e^r}{r^{1/2}}
\end{equation*}
for all $r>0$ and where $\varphi \colon \R_+ \to \R_+$ is any function such that $\varphi(r) \to \infty$ as $r \to \infty$.

Bernal and Bonilla~\cite{BGB16} subsequently identified optimal growth estimates for $D$-irregular entire functions. They proved that the critical rate of growth for $D$-irregular  entire functions is the same as given by \eqref{entireCriticalGrowth} and \eqref{entireCriticalGrowthLower} in the  $D$-hypercyclic case.

Initial growth estimates for $D$-distributionally irregular functions were also obtained in \cite{BGB16}.
They proved for $1 \leq p \leq \infty$ and $a = \left( 2 \max\{ 2, p\} \right)^{-1}$, that for every $\varepsilon > 0$  there exists a $D$-distributionally irregular function $f \in H(\C)$ such that
\begin{equation*}
	 M_p (f,r) \leq C \frac{e^r}{r^{a - \varepsilon}}
\end{equation*}
for some constant $C>0$.

Here we improve the permissible growth estimates for $D$-distributionally irregular entire functions, while also computing estimates for lower rates of growth.

\begin{theorem} \label{thm:DIgrowthEntire}
	Let $1 \leq p \leq \infty$.
	\begin{enumerate}[label=(\roman*)]
		\item  		Let $a = 1/(2 \max \left\lbrace 2, p \right\rbrace)$.  For any $\varphi \colon \Rplus \to \Rplus$ with $\varphi(r) \to \infty$ as $r \to \infty$, there exists
a $D$-distributionally irregular entire function $f$ with
		\begin{equation*}
		M_p(f,r) \leq \varphi(r) \frac{e^r}{r^a}
		\end{equation*}
		for $r>0$ sufficiently large.  \label{item:EntireGrowthI}

			\item
		Let $a = 1/(2 \min \left\lbrace 2, p \right\rbrace)$.  There does not exist a $D$-distributionally irregular entire function $f$ that satisfies
		\begin{equation}  \label{ineq:lowerGrowthEntire}
		M_p (f, r) \leq c \frac{e^r}{r^a}
		\end{equation}
		where $c >0$ is a constant and for $r > 0$ sufficiently large.  \label{item:EntireGrowthII}
	\end{enumerate}
\end{theorem}

\begin{proof}
\ref{item:EntireGrowthI}
	Since
	\begin{equation*}
		M_p (f, r) \leq M_2 (f, r)
	\end{equation*}
	for $1 \leq p < 2$ we need only prove the result for $p \geq 2$.
	Let $2 \leq p \leq \infty$ and we assume without loss of generality that $\inf_{r>0}\varphi (r)>0$.
		Our strategy is to apply Theorem \ref{thm:sufficient}, so we consider
    \begin{equation*}
	Y \coloneqq \left\lbrace  f \in H(\C)  \ :  \ \lim_{r\to \infty} \frac{ M_p (f ,r)r^{1/2p}}{\varphi (r)e^r} =0 \right\rbrace
	\end{equation*}
	endowed with the natural sup-norm {which we denote by $\norm{\, \cdot \, }_Y$.}  We note that $\left( Y, \norm{\, \cdot \, }_Y\right)$  is Banach space which is continuously embedded in $H(\C)$.  Further note that the functions $f \in Y$ satisfy the
desired growth condition.

We let $Y_0 = Y_1$ be the space of polynomials and we note that $Y_0$ is dense in $Y$. It follows immediately that Theorem \ref{thm:sufficient} \ref{item:suffA} is satisfied.

	Next we check Theorem \ref{thm:sufficient} \ref{item:suffB}. We define the mapping $S \colon Y_1 \to Y_1$ for $g \in Y_1$ as
	\begin{equation*}
		S g(z) = \int_0^z g(\xi) \, \mathrm{d}\xi.
	\end{equation*}
	For all $g \in Y_1$  we have that $DSg = g$, and since $\sum_{n=1}^\infty D^n g$ is a finite series it converges unconditionally.

	It remains to show that $\sum_{n=1}^\infty S^n g$ converges unconditionally in $Y$ for any polynomial $g$.  (This calculation appears in \cite[Theorem 2.3]{BBGE10}, but for the convenience of the reader we recall the argument here.)

	It suffices to consider monomials $g(z) = z^k$, $k \in \N$, which gives
	\begin{equation*}
		\sum_{n=1}^\infty S^n g(z) = \sum_{n=1}^\infty \frac{k!}{(k+n)!} z^{k+n}
	\end{equation*}
	and thus we need to prove that $\sum_{n=1}^\infty z^n/n!$ converges unconditionally in $Y$.

	To this end, let $\varepsilon >0$ and $N \in \N$. By the Hausdorff-Young Inequality (cf.~\cite{Kat76}), we obtain for any finite subset $F \subset \N$ that
	\begin{equation*}
		M_p \left( \sum_{n \in F} \frac{z^n}{n!}, r \right) \leq \left( \sum_{n \in F} \frac{r^{qn}}{n!^q} \right)^{1/q},
	\end{equation*}
	where $q$ is the conjugate exponent of $p$, i.e.\ $1/p + 1/q = 1$.
	Hence, if $F \cap \{ 0,1, \dotsc, N \} = \varnothing$, then
	\begin{equation*}
		\norm{\sum_{n \in F} \frac{z^n}{n!}}_Y \leq  \left( \sup_{r>0} \frac{r^{q/2p}}{\varphi(r)^q e^{qr}} \sum_{n > N} \frac{r^{qn}}{n!^q} \right)^{1/q}.
	\end{equation*}
	We choose $R>0$ such that $\varphi(r)^q \geq 1/\varepsilon$ for $r \geq R$. Then it follows that
	\begin{equation*}
		\sup_{r \leq R} \frac{r^{q/2p}}{\varphi(r)^q e^{qr}} \sum_{n > N} \frac{r^{qn}}{n!^q} \leq \frac{R^{q/2p}}{ \inf_{r>0} \varphi(r)^q } \sum_{n > N} \frac{R^{qn}}{n!^q} \to 0
	\end{equation*}
	as $N \to \infty$. Moreover, Lemma \ref{lma:BarnesEstimate} gives that
	\begin{equation*}
		\sup_{r \geq R} \frac{1}{\varphi(r)^q} \frac{r^{q/2p}}{e^{qr}} \sum_{n > N} \frac{r^{qn}}{n!^q} \leq C \varepsilon
	\end{equation*}
	for any $N \in \N$, where $C$ is a constant depending only on $q$.
	Thus,
	\begin{equation*}
	\norm{\sum_{n \in F} \frac{z^n}{n!}}_Y^q \leq  (1+C)\varepsilon
	\end{equation*}
	if $\min F > N$ and $N$ is sufficiently large, so that $\sum_{n=1}^\infty z^n/n!$ converges unconditionally in $Y$.

\noindent\ref{item:EntireGrowthII}
For $p=1$ the result follows from \cite[Theorem 7]{BGB16}.
Moreover since
\begin{equation*}
M_2(f,r) \leq M_p(f,r)
\end{equation*}
for $2 < p \leq \infty$, it suffices to prove the result for $p \leq 2$.
So we assume  $1 < p \leq 2$ and that $f \in H(\C)$ satisfies \eqref{ineq:lowerGrowthEntire}.

Let $B(r)$ denote the open ball of radius $r$ which is centred at the origin.
We define the translation $f_a(z) \coloneqq f(z+a)$, for $a \in \C$,  which is an entire function with $f_a(0) = f(a)$.

We let $R >r$ and it follows from the Hausdorff-Young inequality (cf.~\cite{Kat76}) and \eqref{ineq:lowerGrowthEntire} that for $ a \in B(r)$
\begin{align*}
 \left( \sum_{n=0}^\infty  \left( \frac{\abs{D^n f(a)}}{n!} R^n  \right)^q \right)^{1/q}
 &=\left( \sum_{n=0}^\infty  \left( \frac{\abs{D^n f_a(0)}}{n!} R^n  \right)^q \right)^{1/q} \\
 &\leq M_p (f_a, R)
 \leq M_p (f, R + r)
 \leq c \frac{e^{R+r}}{(R + r)^{1/2p}}
\end{align*}
 where $q$ is the conjugate exponent of $p$ with $1/p + 1/q = 1$.

So it follows that
\begin{equation}  \label{ineq:DerivsWeightedBnd}
		\sum_{n=0}^\infty  \abs{D^n f(a)}^q  \frac{ R^{qn + q/2p}  (1 + r/R)^{q/2p} e^{-qR} } {c^q n!^q e^{qr}} \leq 1.
\end{equation}
We now apply Lemma~\ref{keyl} with $\alpha=q$ and $\beta=q/2p$ to obtain that there exists $C>0$ depending only on $r$ such that
\begin{equation*}
\frac{1}{m} \sum_{n=0}^m \abs{D^n f(a)}^q \leq C.
\end{equation*}

Next let $S(r) = \{ z \in \C \, : \, \abs{z} = r \}$ denote the sphere of radius $r$ centered at the origin, $k\in\N$ arbitrary, and let $a_1, \dotsc, a_k \in S(r)$.
We take the averages
\begin{equation*}
  \frac{1}{k}  \sum_{j=1}^k 	\frac{1}{m} \sum_{n=0}^m \abs{D^n f(a_j)}^q \leq C
\end{equation*}
and hence
\begin{equation*}
	\frac{1}{m} \sum_{n=0}^m  \left( \frac{1}{k}  \sum_{j=1}^k \abs{D^n f(a_j)}^q \right) \leq C.
\end{equation*}
Note that, if the $a_1, \dotsc, a_k \in S(r)$ are uniformly distributed,
\begin{equation*}
\lim_{k \to \infty}	 \frac{1}{k}  \sum_{j=1}^k \abs{D^n f(a_j)}^q = c_r M_q^q (D^n f, r)
\end{equation*}
where $c_r$ is a constant depending only on $r$.
So it follows that the average
\begin{equation*}
 \frac{1}{m} \sum_{n=0}^m  M_q^q (D^n f, r)
\end{equation*}
is bounded and thus $f$ cannot be a distributionally unbounded function for the differentiation operator.
\end{proof}

	Although Theorem \ref{thm:DIgrowthEntire} is an improvement on the previously known growth estimates, it naturally leads to the following question (originally posed in \cite[Problem 12]{BGB16}).
	\begin{qu}
		What is the critical order of growth of $D$-distributionally irregular entire functions?
	\end{qu}
	  Since $D$-hypercyclic and $D$-irregular entire functions share the same critical growth, one might expect that the optimal growth of $D$-distributionally irregular entire functions is similar to the frequently hypercyclic case.

\section{Growth of Irregular and Distributionally Irregular Harmonic Functions}  \label{sec:growthHarmonicFnsI}

We now turn our attention to the permissible growth of harmonic functions that are irregular and distributionally irregular with respect to partial differentiation operators.
We begin by recalling some notation and the background results relevant to this topic. Then we collect some auxiliary results (Subsection \ref{subsec:auxiliaryResults}) that we require in order to calculate the growth rates (Subsection \ref{subsec:GrowthHarmonic}).

The space $\harmsp$ of harmonic functions on $\R^N$, for $N \geq 2$, is a Fr\'{e}chet space when equipped
with the complete metric
\begin{equation*}
d(g,h) \coloneqq \sum_{n=1}^\infty 2^{-n} \frac{\abs{g-h}_{S(n)}}{1 + \abs{g-h}_{S(n)}}
\end{equation*}
for $g,h \in \harmsp$, and it corresponds to the topology of local uniform convergence.
Above we set $\vert f\vert_{S(n)} = \sup_{\vert x\vert = n} \vert f(x)\vert$
for $f \in \harmsp$.

We denote by $B(x, r)$ and $S(x, r)$, respectively, the open ball and the sphere of radius $r$ (in the euclidean metric) with centre at $x \in \R^N$.
When they are centred at the origin of $\R^N$ we simply write $B(r)$ and $S(r)$.

The sup-norm of $h \in \harmsp$ on $S(r)$ is defined as
\begin{equation*}
M_\infty(h,r) = \sup_{\norm{x} = r} \abs{h(x)}.
\end{equation*}
Let $\sigma_r$  denote the normalised $(N-1)$-dimensional measure on $S(r)$, so that $\sigma_r(S(r)) = 1$.
The  $L^2$-average of $h \in \harmsp$ on $S(r)$  is given by
\begin{equation*}
M_2 (h, r) = \left( \int_{S(r)} \abs{h}^2 \; \mathrm{d}\sigma_r \right)^{1/2}
\end{equation*}
where   $r > 0$ and the corresponding inner product  is defined by
\begin{equation*}
\langle g, h \rangle_r = \int_{S(r)} gh \; \mathrm{d}\sigma_r, \quad g, h \in \harmsp.
\end{equation*}

The following sharp growth rates of harmonic functions that are hypercyclic, with respect to partial differentiation operators, follow from the work of Aldred and Armitage~\cite{AA98a}.

\begin{enumerate}[label=(\Roman*)]

\item If $\varphi \colon \Rplus \to \Rplus$ is a function such that $\varphi(r) \to \infty$ as $r \to \infty$, then there exists a $\partial/\partial x_k$-hypercyclic function $h \in \harmsp$ with
\begin{equation*}
M_2(h,r) \leq \varphi(r) \frac{e^r}{r^{(N-1)/2}}
\end{equation*}
for $r>0$ sufficiently large.

\item Let $\alpha \in \N^N$.  There does not exist a $D^\alpha$-hypercyclic harmonic function $h \in \harmsp$ that satisfies
\begin{equation*}
M_2(h,r) \leq C \frac{e^r}{r^{(N-1)/2}}
\end{equation*}
for $r > 0$ and any constant $C > 0$.

\end{enumerate}

 Permissible growth of $\partial/\partial x_k$-frequently hypercyclic harmonic functions  was considered by Blasco et al.~\cite[Theorem 4.2]{BBGE10}, and subsequently the following optimal growth rates were identified in \cite{GST19}:
 for any constant $C > 0$ there exists
 a $\partial/\partial x_k$-frequently hypercyclic harmonic function $h \in \harmsp$ such that
 \begin{equation*}
 M_2(h,r) \leq C \frac{e^r}{r^{N/2 - 3/4}}, \quad  \textrm{ for all } r > 0.
 \end{equation*}

\subsection{Auxiliary Results} \label{subsec:auxiliaryResults}

We gather the notation and background from \cite{AA98a,AG01,ABR01} that is required for our investigation.

We denote by  $\harmpolys{m}$ the space of homogeneous harmonic polynomials  on $\mathbb{R}^N$ of
homogeneity degree $m \geq 0$.
The  harmonic analogue of the standard power series representation of
holomorphic functions states that any $h \in \harmsp$ has a unique expansion of the form
\begin{equation}  \label{eq:DecompHomoHarmPolys}
h = \sum_{m=0}^\infty H_m
\end{equation}
where $H_m \in \harmpolys{m}$ for each $m \ge 0$ and the expansion converges in the metric $d$, cf.~\cite[Corollary 5.34]{ABR01}.  Moreover, $\langle H_j, H_k \rangle_r = 0$ when
$j \neq k$, so  by orthogonality one has for any $r>0$ that the $L^2$-average of \eqref{eq:DecompHomoHarmPolys} is
\begin{equation*}
M_2^2(h,r) = \sum_{m=0}^\infty M_2^2 (H_m, r).
\end{equation*}
We denote the dimension of $\harmpolys{m}$ by $d_m = d_m(N)$ and it can be shown~\cite[Proposition 5.8]{ABR01} that $d_0 = 1$ for $N =2$ and
\begin{equation} \label{dim:dm}
d_m  = \frac{N+2m-2}{N+m-2}\binom{N+m-2}{m}
\end{equation}
for $N+m \geq 3$.
It follows easily from \eqref{dim:dm} that
\begin{equation}  \label{orderOfdm}
d_m = O(m^{N-2})
\end{equation}
as $m \to \infty$ (cf. \cite[p.107]{ABR01}).
Moreover, for $\alpha \in \N^N$ with $\abs{\alpha} = m$ and $H \in \harmpolys{m}$,  $D^\alpha H$ is constant and it follows from \cite[Lemma 1]{AA98a} that
\begin{equation}  \label{ineq:DHupperbound}
\abs{D^\alpha H} \leq m! \sqrt{d_m} r^{-m} M_2 \left( H, r \right)
\end{equation}
for $r>0$.
Further details on the spaces  $\harmpolys{m}$ can be found in   \cite[Chapter 5]{ABR01} and \cite[Chapter 2]{AG01}.

We also require an antiderivative for the partial differentiation operators $\partial/\partial x_k$ on $ \harmsp $, where $1 \leq k \leq N$.  Suitable linear maps were defined by
Aldred and Armitage~\cite{AA98a} by using a  specific orthogonal representation of harmonic polynomials constructed by Kuran~\cite{Kur71}.
We denote the $n^\mathrm{th}$ antiderivative, with respect to the coordinate $x_k$, by the linear map
\begin{equation}
P_{n,k} \colon \harmpolys{m} \to \harmpolys{m+n}  \label{defn:primitiveMaps}
\end{equation}
for $m, n \geq 0$.
For our purposes we do not require  to explicitly define the maps $P_{n,k}$, however we will utilise the pertinent properties which are contained in the following fundamental lemma
taken from \cite[Lemma 4]{AA98a}.
\begin{lemma} \label{lma:PrimitiveProps}
	Let $m, n \geq 0$, $N \geq 2$ and $1 \leq k \leq N$. If $H \in \harmpolys{m}$  then $P_{n,k}(H) \in \harmpolys{m+n}$,
	\begin{equation*}
	\frac{\partial^n}{\partial x_k^n} P_{n,k}(H) = H
	\end{equation*}
	and
	\begin{equation}  \label{ineq:primUpperBnd}
	M_2^2 (P_{n,k}(H), 1) \leq c_{n,m,N} M_2^2 (H,1)
	\end{equation}
	where
	\begin{equation*}
	c_{n,m,N} = \frac{(N+2m-2)!}{n!(N+2m+n-3)!(N+2m+2n-2)}.
	\end{equation*}
\end{lemma}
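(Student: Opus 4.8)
The plan is to trivialise the distinguished coordinate by means of Kuran's orthogonal decomposition of homogeneous harmonic polynomials, which turns $\partial/\partial x_k$ into a weighted backward shift on a one‑parameter family of one‑dimensional blocks, and then to establish the $L^2$‑estimate one block at a time. After permuting coordinates we may assume $k=N$; write $x=(x',x_N)$ with $x'\in\R^{N-1}$. Recall Kuran's construction: given $\ell\ge 0$ and a homogeneous harmonic polynomial $K\in\harmpolysArgs{\ell}{N-1}$ in the variables $x'$ (which is also harmonic on $\R^N$, since $\Delta K=\Delta'K=0$), one defines, writing $\mu=\ell+(N-2)/2$,
\[
E_j(K)(x)\coloneqq K(x')\,|x|^{j}\,C_j^{\mu}\bigl(x_N/|x|\bigr)\in\harmpolysArgs{\ell+j}{N},\qquad j\ge 0,
\]
where $C_j^{\mu}$ denotes the ultraspherical (Gegenbauer) polynomial of degree $j$; harmonicity is checked directly using $\Delta'K=0$ and Euler's relation $x'\!\cdot\!\nabla'K=\ell K$. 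Kuran's theorem then gives the decomposition $\harmpolysArgs{m}{N}=\bigoplus_{\ell=0}^{m}\bigl\{E_{m-\ell}(K):K\in\harmpolysArgs{\ell}{N-1}\bigr\}$, the summands being $\langle\,\cdot\,,\,\cdot\,\rangle_r$‑orthogonal for every $r>0$ (for $N=2$ only $\ell\in\{0,1\}$ occur and the decomposition is into $\mathrm{Re}\,z^m$ and $\mathrm{Im}\,z^m$).

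The next step is to observe that $\partial_{x_N}$ is block‑diagonal: it leaves $K(x')$ untouched, and a short computation starting from the contiguous relation $(1-t^2)(C_j^{\mu})'(t)=-jt\,C_j^{\mu}(t)+(j+2\mu-1)C_{j-1}^{\mu}(t)$ gives
\[
\frac{\partial}{\partial x_N}E_j(K)=\gamma_{j,\ell}\,E_{j-1}(K),\qquad \gamma_{j,\ell}=j+2\ell+N-3,
\]
and $\partial_{x_N}E_0(K)=0$. Writing a given $H\in\harmpolysArgs{m}{N}$ as $H=\sum_{\ell=0}^{m}E_{m-\ell}(K_\ell)$, we may therefore \emph{define}
\[
P_{n,k}(H)\coloneqq\sum_{\ell=0}^{m}\Bigl(\textstyle\prod_{i=1}^{n}\gamma_{m-\ell+i,\ell}\Bigr)^{-1}E_{m-\ell+n}(K_\ell)\in\harmpolysArgs{m+n}{N};
\]
equivalently, $P_{n,k}(H)$ is the unique preimage of $H$ under $\partial_{x_N}^{n}$ in $\harmpolysArgs{m+n}{N}$ orthogonal to $\ker(\partial_{x_N}^{n})\cap\harmpolysArgs{m+n}{N}$, i.e.\ the preimage of minimal $L^2$‑norm. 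This $P_{n,k}$ is linear, carries $\harmpolysArgs{m}{N}$ into $\harmpolysArgs{m+n}{N}$, and satisfies $\partial_{x_N}^{n}P_{n,k}(H)=H$, so the first two assertions of the lemma hold.

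For the $L^2$‑bound, the orthogonality of the blocks gives
\[
M_2^2\bigl(P_{n,k}(H),1\bigr)=\sum_{\ell=0}^{m}\Bigl(\textstyle\prod_{i=1}^{n}\gamma_{m-\ell+i,\ell}\Bigr)^{-2}M_2^2\bigl(E_{m-\ell+n}(K_\ell),1\bigr),\qquad M_2^2(H,1)=\sum_{\ell=0}^{m}M_2^2\bigl(E_{m-\ell}(K_\ell),1\bigr),
\]
so it is enough to bound, uniformly over $0\le\ell\le m$ and $K\in\harmpolysArgs{\ell}{N-1}$, the quantity
\[
\Bigl(\textstyle\prod_{i=1}^{n}\gamma_{m-\ell+i,\ell}\Bigr)^{-2}\,\frac{M_2^2\bigl(E_{m-\ell+n}(K),1\bigr)}{M_2^2\bigl(E_{m-\ell}(K),1\bigr)}.
\]
Passing to spherical coordinates $x=(\sin\theta\,\omega,\cos\theta)$ on the unit sphere, with $\omega$ on the unit sphere of $\R^{N-1}$, one finds that $M_2^2(E_j(K),1)$ is a fixed multiple (depending only on $N$) of $\|K\|^2\int_{-1}^{1}[C_j^{\mu}(t)]^2(1-t^2)^{\mu-1/2}\,dt$, whose value is the classical Gegenbauer $L^2$‑norm; the factor $\|K\|^2$ cancels in the ratio, the $\gamma_{j,\ell}$ are the explicit constants above, and the displayed quantity collapses to
\[
\frac{(m+\ell+N-3)!}{(m+\ell+n+N-3)!}\cdot\frac{(m-\ell)!}{(m-\ell+n)!}\cdot\frac{2m+N-2}{2m+2n+N-2}.
\]
Since for each fixed $i$ the pair $(m+\ell+i+N-3,\,m-\ell+i)$ has $\ell$‑independent sum, the $\ell$‑dependent factor is largest at $\ell=m$; evaluating there yields exactly $c_{n,m,N}$ as in the statement, so this is in fact the best constant (the degenerate cases $N=2$ and $m=0$, where the Gegenbauer formulas break down, are checked directly using $\mathrm{Re}\,z^m$ and $\mathrm{Im}\,z^m$).

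The computational heart is this last step: pinning down $\gamma_{j,\ell}$ and the Gegenbauer $L^2$‑norms in closed form, simplifying their combination to the product of factorials above, and carrying out the optimisation in $\ell$. This is also what shows that the bound genuinely improves on what one would obtain by iterating the first antiderivative $P_{1,k}$ and estimating at each step by the worst single‑step constant: the trajectory realising the extremal $n$‑th order ratio stays in the block $\ell=m$ of $\harmpolysArgs{m}{N}$, which at every intermediate degree $m+i$ is \emph{not} the worst block for one antiderivative, so the product of the one‑step worst constants overestimates.
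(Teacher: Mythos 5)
The paper does not give a proof of this statement: it is Lemma~4 of Aldred and Armitage \cite{AA98a}, quoted verbatim, so there is no ``paper's own proof'' to compare against. What you have done is reconstruct the proof that underlies the citation, and you have done it along exactly the lines used by Kuran \cite{Kur71} and Aldred--Armitage: the orthogonal block decomposition of $\harmpolysArgs{m}{N}$ into axial harmonics $E_j(K)=K(x')\,|x|^j C_j^{\mu}(x_N/|x|)$, the observation that $\partial_{x_N}$ is a weighted backward shift with weight $\gamma_{j,\ell}=j+2\ell+N-3$ (which follows from the Gegenbauer contiguous relation as you indicate), and the comparison of Gegenbauer $L^2$--norms. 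I have checked the computational core: the product $\prod_{i=1}^n\gamma_{m-\ell+i,\ell}$ telescopes to $\Gamma(m+\ell+n+N-2)/\Gamma(m+\ell+N-2)$, the norm ratio reduces as you display, and the optimisation over $\ell$ reduces to minimising $ab$ subject to $a+b=2m+N-3$ fixed, which is indeed attained at the endpoint $\ell=m$; evaluating there gives $c_{n,m,N}$ exactly. So the inequality and the definition of $P_{n,k}$ are correct, and your concluding remark about why iterating the one--step bound would overshoot is a genuinely good observation.

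One small inaccuracy to flag: your claim that $c_{n,m,N}$ is ``the best constant'' holds for $N\geq 3$ (where $\harmpolysArgs{m}{N-1}\neq\{0\}$, so the block $\ell=m$ is nonempty and the extremal $H=E_0(K)$ exists), but not for $N=2$ and $m\geq 2$, where $\harmpolysArgs{\ell}{1}=\{0\}$ for $\ell\geq 2$ and only $\ell\in\{0,1\}$ occur. In that case the evaluation at $\ell=m$ is still a legitimate upper bound for the (unimodal in $\ell$) expression, so inequality \eqref{ineq:primUpperBnd} holds, but it is not attained; the two surviving blocks give the same, strictly smaller, ratio. This does not affect the validity of the lemma, only the optimality claim. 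It would also be cleaner to state explicitly that the $\mu=0$ case ($N=2$, $\ell=0$) uses the Chebyshev limit $C_j^0\propto T_j$ rather than the raw Gegenbauer norm formula, since $\Gamma(\mu)^{-2}$ vanishes there; you gesture at this but the derivative identity $\gamma_{j,0}=j-1$ for $N=2$, $\ell=0$ should be verified directly against $\partial_{x_2}\operatorname{Re}(x_2+ix_1)^j=j\operatorname{Re}(x_2+ix_1)^{j-1}$ (the discrepancy disappears once the degenerate normalisation of $C_j^0$ is accounted for).
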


Similar to line (4.2) in  \cite{BBGE10}, for fixed $m$ we will use the simpler estimate
\begin{equation} \label{ineq:GrowthEst}
c_{n,m,N}  \leq \frac{c_m}{(n + m)!^2 (n + m +1)^{N -2}}
\end{equation}
for $n \in \N$,  where
\begin{equation}  \label{defn:c_m}
c_m = c_m(N) = (N+2m-2)!
\end{equation}
We  note that the different maps $P_{n,k}$ are mutually compatible since for $H \in \harmpolys{m}$ and $\ell, n \geq 0$ we have that
\begin{equation*}
P_{\ell + n,k} \left( H \right) = P_{\ell,k} \left( P_{n,k} \left( H \right) \right).
\end{equation*}
A proof of this fact can be found in \cite[Lemma 3.3]{GST19}. In particular it holds that $\frac{\partial^n}{\partial x_k^n} P_{\ell,k}(H)=P_{\ell-n,k}(H)$ for $\ell>n$.

We also need the following lemma on inequalities between $L^2$-norms of harmonic functions with respect to $N$-spheres with different centres.  This is essentially known, but for  completeness we include a proof.  The proof requires the Poisson integral and Harnack's inequality, which we recall below (full details can be found in \cite[Sections 1.3 and 1.4]{AG01}).

We let $\sigma_N = \sigma(S(1))$ be the surface area of the $N$-sphere $S(1)$, where $\sigma$ denotes the (unnormalised) surface area measure.
 The \emph{Poisson kernel} of the ball $B(x_0, r)$ is given by the function
\begin{equation*}
K_{x_0, r}(x,y) \coloneqq \frac{1}{\sigma_N r} \frac{r^2 - \norm{x - x_0}^2}{\norm{x-y}^N}
\end{equation*}
for $y \in S(x_0, r)$ and $x \in \R^N \setminus \{y\}$.

For a function $h$ continuous on $S(x_0,r)$, the \emph{Poisson integral} is defined as
\begin{equation*}
I_{h, x_0, r}(x) \coloneqq \int_{S(x_0,r)} K_{x_0, r}(x,y) \, h(y) \; \mathrm{d}\sigma(y)
\end{equation*}
for $x \in B(x_0, r)$.  It is a fundamental result of potential theory that $I_{h, x_0, r}$ defines a harmonic function on the ball $B(x_0, r)$ with boundary values on $S(x_0, r)$ given by $h$.

We recall that Harnack's inequality states if $h$ is a positive harmonic function on $B(x_0, r)$, then
\begin{equation*}
h(x) \leq \frac{\left( r + \norm{x - x_0} \right) r^{N-2}}{\left( r - \norm{x-x_0} \right)^{N-1}} h(x_0)
\end{equation*}
for each $x \in B(x_0, r)$.

\begin{lemma}\label{ineqharmnorm}
	Let $N \geq 2$.  Given $h \in \harmsp$, $r>0$, $R>r$, and $a\in\R^N$ with $\norm{a}\leq r$, we consider the translated harmonic function $h_a$ defined by $h_a(x)=h(a+x)$. We then have
	\[
	M_2(h_a,R)\leq C_N M_2(h,r+R),
	\]
	where $C_N>0$ is a constant that only depends on $N$.
\end{lemma}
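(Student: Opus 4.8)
The plan is to compare both sides to the Poisson integral of $\abs{h}^{2}$ over the large sphere $S(r+R)$ and then invoke Harnack's inequality. The case $h\equiv 0$ is trivial, so assume $h\not\equiv 0$; since a harmonic function vanishing on a sphere vanishes identically (by uniqueness for the Dirichlet problem together with real-analyticity of harmonic functions), the continuous function $\abs{h}^{2}$ does not vanish identically on $S(r+R)$. Write $\rho\coloneqq r+R$ and let $u\coloneqq I_{g,0,\rho}$ be the Poisson integral on $B(\rho)$ of $g$, the restriction of $\abs{h}^{2}$ to $S(\rho)$; then $u$ is harmonic and strictly positive on $B(\rho)$, and since $K_{0,\rho}(0,y)=1/(\sigma_{N}\rho^{N-1})$ a direct computation with the definition of $u(0)$ and of the normalised measure $\sigma_{\rho}$ gives $u(0)=M_{2}(h,\rho)^{2}$. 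As $\abs{h}^{2}$ is subharmonic (indeed $\Delta\abs{h}^{2}=2\abs{\nabla h}^{2}\ge 0$), the maximum principle shows that $\abs{h}^{2}$ is dominated on $B(\rho)$ by the harmonic function with the same boundary values, that is $\abs{h}^{2}\le u$ on $B(\rho)$ (see \cite{AG01}).

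I would first prove the estimate when $\norm{a}<r$. Then $\norm{a}+R<\rho$, so $\overline{B(a,R)}\subset B(\rho)$, and the mean value property of the harmonic function $u$, together with the translation $x\mapsto a+x$ (an isometry carrying $S(R)$ onto $S(a,R)$ that preserves the normalised surface measure), yields
\[
M_{2}(h_{a},R)^{2}=\int_{S(R)}\abs{h(a+x)}^{2}\,\mathrm{d}\sigma_{R}(x)\le\int_{S(R)}u(a+x)\,\mathrm{d}\sigma_{R}(x)=u(a).
\]
Applying Harnack's inequality to the positive harmonic function $u$ on $B(\rho)$ at the point $a$, and using $\norm{a}<r<R$ to bound the resulting fractions, I would obtain
\[
u(a)\le\frac{(\rho+\norm{a})\,\rho^{N-2}}{(\rho-\norm{a})^{N-1}}\,u(0)\le\frac{(2r+R)(r+R)^{N-2}}{R^{N-1}}\,M_{2}(h,\rho)^{2}\le 3\cdot 2^{N-2}\,M_{2}(h,\rho)^{2},
\]
so the lemma holds with $C_{N}=\sqrt{3\cdot 2^{N-2}}$ for every $a$ with $\norm{a}<r$.

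Finally I would remove the restriction $\norm{a}<r$ by a limiting argument: applying the previous step with $a$ replaced by $ta$ for $t\in(0,1)$ gives $M_{2}(h_{ta},R)\le C_{N}M_{2}(h,r+R)$, and since $h$ is uniformly continuous on the compact ball $\overline{B(r+R)}$ one has $h_{ta}\to h_{a}$ uniformly on $S(R)$ as $t\uparrow 1$, whence $M_{2}(h_{ta},R)\to M_{2}(h_{a},R)$ and the inequality passes to the limit. I expect the only genuinely delicate point to be this boundary case $\norm{a}=r$, in which $S(a,R)$ becomes internally tangent to $S(\rho)$ and $u$ is no longer defined on all of $\overline{B(a,R)}$; everything else is routine bookkeeping with the Poisson kernel and Harnack's inequality.
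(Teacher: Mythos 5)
Your proof is correct and takes essentially the same route as the paper's: both dominate $|h|^2$ by its Poisson integral over $S(r+R)$ via subharmonicity and then apply Harnack's inequality to that Poisson integral at $a$. The only cosmetic difference is that the paper introduces a second Poisson integral $I_{a,R}$ and compares the two by the maximum principle, whereas you reach the same intermediate bound $\int_{S(a,R)}|h|^2\,\mathrm{d}\sigma\le u(a)$ directly from the mean value property of $u$; your explicit handling of the degenerate case $h\equiv 0$ and of the tangency at $\norm{a}=r$ (which the paper passes over silently) is a welcome extra bit of care.
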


\begin{proof}
	For brevity we denote the Poisson integrals of the subharmonic function $h^2$ on the spheres $S(a,R)$ and $S(r+R)$, respectively, by $I_{a, R}$ and $I_{0, r+R}$.

	First observe that
	\begin{align*}
	M_2^2(h,r+R) &= \int_{S(r+R)} \abs{h(y)}^2 \; \mathrm{d}\sigma_{(r+R)}(y) \\
	&=  \frac{1}{\sigma_N (r+R)^{N-1}}\int_{S(r+R)} h(y)^2 \; \mathrm{d}\sigma(y) = I_{0, r+R}(0)
	\intertext{and similarly}
	M_2^2(h_a,R) &= \int_{S(R)} \abs{h_a(y)}^2 \; \mathrm{d}\sigma_R(y)
	= \int_{S(a,R)} \abs{h(y)}^2 \; \mathrm{d}\sigma_R(y)  = I_{a, R}(a).
	\end{align*}

	Next notice	that $I_{a, R} = h^2$ on the sphere $S(a,R)$, and that the subharmonicity of $h^2$ gives that $h^2 \leq I_{0, r+R}$ in the ball $B(r+R)$.
	  So it holds that $I_{a, R} = h^2 \leq I_{0, r+R}$ on $S(a,R)$.
	By the maximum principle it follows that $I_{a, R}  \leq I_{0, r+R}$ on $B(a,R)$ and moreover
	\begin{equation*}
	I_{a, R}(a)  \leq I_{0, r+R}(a).
	\end{equation*}

	Next, since $I_{0, r+R}$ is a positive harmonic function on $B(r+R)$, we have by Harnack's inequality and the facts that $\norm{a} \leq r$  and $r<R$
	\begin{align*}
	I_{0, r+R}(a) &\leq \frac{\left( r+R + \norm{a} \right) (r+R)^{N-2}}{\left( r + R - \norm{a} \right)^{N-1}} I_{0, r+R}(0) \\
	&\leq \frac{\left( 2r+R \right) (r+R)^{N-2}}{R^{N-1}} I_{0, r+R}(0) \\
	&= \left( \frac{2r}{R} + 1\right) \left( \frac{r}{R} +1 \right)^{N-2} I_{0, r+R}(0) = C_N I_{0, r+R}(0)
	\end{align*}
	where the constant $C_N$ depends only on $N$ and the result follows.
\end{proof}

\subsection{Growth Rates} \label{subsec:GrowthHarmonic}

We are now ready to identify the optimal $M_2$-average growth of harmonic functions that are irregular with respect to partial differentiation. We note that the growth rates are the same as in the hypercyclic case.

\begin{theorem}   \label{thm:IrregularGrowth}
Let  $1 \leq k \leq N$.
\begin{enumerate}[label=(\roman*)]
\item Let $\varphi \colon \Rplus \to \Rplus$ be any function with $\varphi(r) \to \infty$ as $r \to \infty$.    Then there exists a $\partial/\partial x_k$-irregular harmonic
    function $h \in \harmsp$ with
\begin{equation*}
M_2(h,r) \leq \varphi(r) \frac{e^r}{r^{(N-1)/2}}
\end{equation*}
for $r>0$ sufficiently large.  \label{thm:Irreg1}

\item Let $\alpha \in \N^N$.  There does not exist a $D^\alpha$-irregular harmonic function $h \in \harmsp$ that satisfies
\begin{equation}\label{ineq:IrregNonExistRate}
M_2(h,r) \leq C \frac{e^r}{r^{(N-1)/2}}
\end{equation}
for $r > 0$ and any constant $C > 0$.  \label{thm:Irreg2}
\end{enumerate}
\end{theorem}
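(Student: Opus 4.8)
The two halves of the theorem point in opposite directions, so I would treat them separately. For part (i) the shortest route is to observe that a harmonic function that is universal for $\partial/\partial x_k$ is in particular hypercyclic for $\partial/\partial x_k$, hence irregular; the statement then follows at once from Aldred and Armitage's construction in \cite{AA98a} of $\partial/\partial x_k$-universal harmonic functions obeying a bound of exactly the form $M_2(h,r)\le\varphi(r)e^r/r^{(N-1)/2}$. If instead one wants a self-contained argument, one builds $h$ by hand as a suitably lacunary series $h=\sum_j Q_j$, $Q_j\in\harmpolys{m_j}$, along a rapidly increasing sequence $(m_j)$ (gaps of order at least $\sqrt{m_j}$), each $Q_j$ scaled so that one subsequence of the $\partial/\partial x_k$-orbit of $h$ tends to $0$ in $\harmsp$ while another escapes to infinity in a fixed seminorm; the growth of $M_2(h,r)$ is then held within the prescribed rate by the antiderivative estimate \eqref{ineq:GrowthEst}, exactly as in \cite[Theorem~2.3]{BBGE10} and \cite{GST18}.

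Part (ii) is the substantive half, and I would argue by contradiction. Suppose $h\in\harmsp$ is $D^\alpha$-irregular and satisfies $M_2(h,r)\le Ce^r/r^{(N-1)/2}$ for all $r>0$; I will show the entire $D^\alpha$-orbit of $h$ is bounded in $\harmsp$, which already contradicts irregularity (irregularity demands a seminorm along which a subsequence of the orbit diverges). Fix $\rho>0$ and a point $a\in\R^N$ with $\norm{a}\le\rho$, and write the homogeneous expansion of the translated harmonic function $h_a(x)=h(a+x)$ as $h_a=\sum_{m\ge0}H_m^{(a)}$ with $H_m^{(a)}\in\harmpolys{m}$. On the one hand, Lemma~\ref{ineqharmnorm} together with the hypothesis gives, for $R>\rho$,
\[
M_2^2(h_a,R)\le C_N^2\,M_2^2(h,\rho+R)\le C_N^2 C^2\,\frac{e^{2(\rho+R)}}{(\rho+R)^{N-1}}.
\]
On the other hand, since $(D^\alpha)^jh_a(0)=D^{j\alpha}h_a(0)=D^{j\alpha}H_m^{(a)}$ with $m=j\abs{\alpha}$ (the parts of lower degree are annihilated, those of higher degree vanish at the origin), the pointwise bound \eqref{ineq:DHupperbound} applied to the multi-index $j\alpha$ at radius $1$ gives $\abs{(D^\alpha)^jh(a)}=\abs{D^{j\alpha}H_m^{(a)}}\le m!\sqrt{d_m}\,M_2(H_m^{(a)},1)$, so by orthogonality and homogeneity
\[
M_2^2(h_a,R)\ \ge\ R^{2m}M_2^2(H_m^{(a)},1)\ \ge\ \frac{R^{2m}}{(m!)^2 d_m}\,\abs{(D^\alpha)^jh(a)}^2 .
\]

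Comparing the two displays and taking $R=m$ yields, for every $j$ with $m=j\abs{\alpha}>\rho$,
\[
\abs{(D^\alpha)^jh(a)}^2\ \le\ C_N^2 C^2\,(m!)^2 d_m\,\frac{e^{2(\rho+m)}}{m^{2m}(\rho+m)^{N-1}} .
\]
By Stirling's formula $m^{2m}=(m!)^2e^{2m}(2\pi m)^{-1}e^{-2\lambda_m}$ with $0<\lambda_m<1/(12m)$, so the factor $(m!)^2$ cancels and one is left with $\abs{(D^\alpha)^jh(a)}^2\le C'\,m\,d_m/(\rho+m)^{N-1}$, where $C'$ depends only on $N$, $\rho$ and $C$. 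Since $d_m=O(m^{N-2})$ by \eqref{orderOfdm}, the quantity $m\,d_m/(\rho+m)^{N-1}$ is bounded uniformly in $j$ and in $a\in\overline{B(\rho)}$; together with the finitely many $j$ with $j\abs{\alpha}\le\rho$ (for which $(D^\alpha)^jh$ is a fixed harmonic function) this gives $\sup_j\sup_{\norm{x}=\rho}\abs{(D^\alpha)^jh(x)}<\infty$ for every $\rho>0$. Hence the $D^\alpha$-orbit of $h$ is bounded in $\harmsp$, so $h$ is not $D^\alpha$-irregular, a contradiction. (One could alternatively feed the inequality preceding the choice $R=m$ into Lemma~\ref{keyl} after a change of variable, but for mere irregularity the direct optimization is enough.)

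The main obstacle in (ii) is the exponent arithmetic: the $(m!)^2$ produced by optimizing the exponential in $R$ has to cancel the $(m!)^2$ coming from \eqref{ineq:DHupperbound}, after which the surviving polynomial factors $m\,d_m$ and $(\rho+m)^{N-1}$ must balance to $O(1)$ — this is exactly why $(N-1)/2$ is the critical exponent, and it is precisely where the bound $d_m=O(m^{N-2})$ from \eqref{orderOfdm} is used. For (i) the only real content is producing a harmonic function that is simultaneously irregular and this small; quoting \cite{AA98a} bypasses this, and a direct proof reduces to the (routine but delicate) estimation of $M_2$ of the lacunary series, which is the harmonic counterpart of the computation behind \cite[Theorem~2.3]{BBGE10} and of the Stirling-type analysis carried out in Lemma~\ref{keyl}.
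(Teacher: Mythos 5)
Your proof is correct and follows essentially the same route as the paper's: part (i) is dispatched, as in the paper, by quoting Aldred--Armitage's universal construction and observing that hypercyclic vectors are irregular, and part (ii) combines the same ingredients the paper uses, namely the pointwise bound \eqref{ineq:DHupperbound}, Lemma~\ref{ineqharmnorm}, Stirling's formula, and $d_m=O(m^{N-2})$. The only (cosmetic) difference in (ii) is that the paper bounds $M_\infty(D^{n\alpha}h,r)$ in a single chain and optimizes at $R=n\abs{\alpha}+(N-1)/2$, whereas you work with squared quantities, isolate a single homogeneous term via orthogonality, and take $R=m$; both choices yield the same order and the same boundedness of the orbit.
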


\begin{proof}
\ref{thm:Irreg1} This follows from \cite[Theorem 1]{AA98a} since hypercyclic vectors are irregular.

\noindent \ref{thm:Irreg2} Let $h \in \harmsp$.  We recall that the translation $h_a(x) \coloneqq h(x+a)$ preserves harmonicity and we further note that $h_a(0) = h(a)$, where $a
\in \R^N$.
Furthermore it follows from \eqref{eq:DecompHomoHarmPolys} that $h_a$ has a unique representation of the form
\begin{equation*}
h_a = \sum_{j=0}^\infty H_{a,j}
\end{equation*}
where $H_{a,j} \in \harmpolys{j}$.

For $n \in \N$ and $\alpha \in \N^N$ we may differentiate under the summation sign to obtain
\begin{equation}  \label{ineq:DiffatZero}
	D^{n\alpha} h(a) = D^{n\alpha} h_a(0) = \sum_{j=0}^\infty \left( D^{n\alpha} H_{a,j} \right) (0) = \left( D^{n\alpha} H_{a, n\abs{\alpha}} \right)(0)
\end{equation}
where we use the convention that $D^{n\alpha} = (D^\alpha)^n$.

Fix $r>0$ and let $a \in B(r)$.  For $R > r$ it follows from  \eqref{ineq:DiffatZero} and \eqref{ineq:DHupperbound} that
\begin{align*}
\abs{ D^{n\alpha} h(a) }   &\leq ( n\abs{\alpha})! \sqrt{d_{ n\abs{\alpha}}} R^{- n\abs{\alpha}} M_2(H_{a, n\abs{\alpha}}, R) \\
&\leq   ( n\abs{\alpha})! \sqrt{d_{n\abs{\alpha}}} R^{- n\abs{\alpha}} M_2(h_a, R).
\end{align*}
Applying Lemma \ref{ineqharmnorm} we get that
\begin{equation*}
M_\infty \left( D^{n\alpha} h, r \right)   \leq  c_N (n\abs{\alpha})! \sqrt{d_{n\abs{\alpha}}} R^{- n\abs{\alpha}} M_2(h, r+R).
\end{equation*}
Next suppose that \eqref{ineq:IrregNonExistRate} holds.  By \eqref{orderOfdm} we know that
\begin{equation*}
d_{n\abs{\alpha}} = O((n\abs{\alpha})^{N-2})
\end{equation*}
 as $n \to \infty$ and hence there exists a constant $C$, independent of $n$ and $r$, such that
\begin{equation*}
M_\infty \left( D^{n\alpha} h, r \right) \leq C \frac{(n\abs{\alpha})! (n\abs{\alpha})^{(N-2)/2} e^{r+R}}{R^{n\abs{\alpha}}  (r+R)^{(N-1)/2}}.
\end{equation*}
Applying Stirling's formula and choosing  $R = n\abs{\alpha} + (N-1)/2$ we obtain that
\begin{align*}
 M_\infty \left( D^{n\alpha} h, r \right)
 &\leq C \frac{(n\abs{\alpha})^{n\abs{\alpha} +(N-1)/2}  e^{r + n\abs{\alpha} + (N-1)/2}}{ \left( n\abs{\alpha} +(N-1)/2 \right)^{n\abs{\alpha} + (N-1)/2} e^{n\abs{\alpha}}  \left(1
 + \frac{r}{n\abs{\alpha} +(N-1)/2} \right)^{(N-1)/2}}  \\
 &\leq C  e^{r + (N-1)/2}  \left( 1 + \frac{N-1}{2n\abs{\alpha}} \right)^{-n\abs{\alpha}}
 \leq C  e^{r + (N-1)/2}.
\end{align*}
So we get that the sequence $\left\lbrace M_\infty  \left( D^{n\alpha} h, r \right) \right\rbrace_n$ is bounded and since $h$ does not have an unbounded orbit it  cannot be
irregular.
\end{proof}

Next we compute growth rates for $\partial/\partial x_k$-distributionally irregular harmonic functions and we note that the below growth is optimal.

\begin{theorem}  \label{thm:DIgrowth2}
Let  $1 \leq k \leq N$.
\begin{enumerate}[label=(\roman*)]
	\item    Let $\varphi \colon \Rplus \to \Rplus$ be any function with $\varphi(r) \to \infty$ as $r \to \infty$. Then there exists a harmonic function $h \in \harmsp$ which is
distributionally irregular with respect to the partial differentiation operator $\partial/\partial x_k$, such that
	\begin{equation}
	M_2(h,r) \leq \varphi(r) \frac{e^r}{r^{N/2 -3/4}}
	\end{equation}
	for $r>0$ sufficiently large.   \label{thm:DI01}

	\item  There does not exist a  $\partial/\partial x_k$-distributionally irregular $h \in \harmsp$ satisfying
	\begin{equation}
	M_2(h,r) \leq c \frac{e^r}{r^{N/2 -3/4}}    \label{ineq:DInonExistRate}
	\end{equation}
	where $c>0$ is constant and for $r>0$ sufficiently large.  \label{thm:DI02}
\end{enumerate}
\end{theorem}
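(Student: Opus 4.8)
\noindent\emph{Proof strategy.}
For part~(i) the plan is to invoke Theorem~\ref{thm:sufficient}. Assuming (as we may) that $\inf_{r>0}\varphi(r)>0$, I would work in the Banach space
\[
X \coloneqq \Big\{\, h \in \harmsp \ :\ \norm{h}_X \coloneqq \sup_{r>0}\frac{M_2(h,r)\,r^{N/2-3/4}}{\varphi(r)\,e^{r}}<\infty \,\Big\}
\]
or, more precisely, in the closure of the space of harmonic polynomials in $(X,\norm{\,\cdot\,}_X)$: a separable Banach space, continuously embedded in $\harmsp$, all of whose members satisfy the desired growth bound. I would then take $X_0=Y_0$ to be the harmonic polynomials, $T=\partial/\partial x_k$, $S=P_{1,k}$ the first antiderivative of Lemma~\ref{lma:PrimitiveProps} (so that $S^{n}=P_{n,k}$ by the composition property noted after \eqref{ineq:GrowthEst}), and $z=1$. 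Condition~(a) of Theorem~\ref{thm:sufficient} is immediate since $(\partial/\partial x_k)^{n}$ eventually annihilates every polynomial; in condition~(b) the series $\sum_{n\ge1}T^{n}z$ is the zero series, so the whole matter reduces to showing that $\sum_{n\ge1}P_{n,k}(1)$ converges unconditionally in $X$.

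This is where the one genuinely delicate estimate enters. Since $P_{n,k}(1)\in\harmpolys{n}$ is homogeneous of degree $n$, the orthogonality $\langle H_j,H_k\rangle_r=0$ for $j\ne k$ gives $M_2^{2}\big(\sum_{n\in G}P_{n,k}(1),r\big)=\sum_{n\in G}r^{2n}M_2^{2}(P_{n,k}(1),1)$ for every finite $G\subset\N$, whence, using $M_2(1,1)=1$ and \eqref{ineq:GrowthEst} with $m=0$,
\[
\norm{\,\textstyle\sum_{n\in G}P_{n,k}(1)\,}_X^{2}\ \le\ c_0\,\sup_{r>0}\frac{1}{\varphi(r)^{2}}\sum_{n\in G}\frac{r^{2n+N-3/2}\,e^{-2r}}{(n!)^{2}\,(n+1)^{N-2}}.
\]
The two facts I would establish are: (1) the full series $\sum_{n\ge1}r^{2n+N-3/2}e^{-2r}\big/\big((n!)^{2}(n+1)^{N-2}\big)$ is bounded \emph{uniformly in $r>0$} --- estimating $(n+1)^{N-2}$ from below by a constant multiple of $r^{N-2}$ over the indices $n\approx r$ where the series is concentrated, and comparing the rest with $\sum_{n\ge0}r^{2n}/(n!)^{2}=I_0(2r)\sim e^{2r}/(2\sqrt{\pi r})$, one finds that the factor $r^{N-3/2}$ cancels exactly, and this is precisely what pins the exponent to $N/2-3/4$; and (2) the tail $\sum_{n\ge n_0}$ of that series is negligible once $r\le\sqrt{n_0}$ (there the first term dominates a geometric series, and one concludes via Stirling's formula). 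Since $\varphi(r)\to\infty$, (1) and (2) together force $\sup_{r>0}\varphi(r)^{-2}\sum_{n\ge n_0}(\cdots)\to0$ as $n_0\to\infty$, which is exactly the Cauchy criterion for unconditional convergence. Theorem~\ref{thm:sufficient} then yields a dense set of $\partial/\partial x_k$-distributionally irregular functions in $X$, each satisfying the required bound.

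For part~(ii) I would argue by contradiction, in the spirit of Theorem~\ref{thm:IrregularGrowth}(ii) and of the entire-function case. Suppose $h\in\harmsp$ satisfies \eqref{ineq:DInonExistRate}; fix $m_0\in\N$ and put $r\coloneqq m_0+1$. For $a\in\overline{B(r)}$, writing $h_a=\sum_{j\ge0}H_{a,j}$ with $H_{a,j}\in\harmpolys{j}$, one has $\frac{\partial^{n}}{\partial x_k^{n}}h(a)=\big(\frac{\partial^{n}}{\partial x_k^{n}}H_{a,n}\big)(0)$; combining \eqref{ineq:DHupperbound} (to bound this quantity by $n!\sqrt{d_n}\,R^{-n}M_2(H_{a,n},R)$), the Parseval identity $M_2^{2}(h_a,R)=\sum_{j\ge0}M_2^{2}(H_{a,j},R)$, Lemma~\ref{ineqharmnorm} and \eqref{ineq:DInonExistRate} (which give $M_2^{2}(h_a,R)\le C_N^{2}M_2^{2}(h,r+R)\le C\,e^{2r}e^{2R}/R^{N-3/2}$ for $R$ large, while $M_2^{2}(h_a,R)$ is bounded for $R$ in a compact range by continuity and compactness), and \eqref{orderOfdm}, I would derive
\[
\sum_{n\ge1}\Big|\frac{\partial^{n}}{\partial x_k^{n}}h(a)\Big|^{2}\frac{R^{2n+N-3/2}\,e^{-2R}}{(n!)^{2}\,n^{N-2}}\ \le\ C\qquad\text{for all }R>0\text{ and }a\in\overline{B(r)},
\]
which is exactly the hypothesis of Lemma~\ref{keyl} with $\alpha=2$ and $\beta=N-3/2$ (so that $\beta-\alpha/2+1/2=N-2$). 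Hence $\frac1m\sum_{n=1}^{m}\big|\frac{\partial^{n}}{\partial x_k^{n}}h(a)\big|^{2}\le B$ uniformly for $a\in\overline{B(r)}$ and $m\in\N$. Since $\big|\frac{\partial^{n}}{\partial x_k^{n}}h\big|^{2}$ is subharmonic, the sub-mean value inequality over the ball $B(a,1)\subset B(m_0+1)$ (for $a\in S(m_0)$) bounds $M_\infty\big(\frac{\partial^{n}}{\partial x_k^{n}}h,m_0\big)^{2}$ by a fixed multiple of $\int_{B(m_0+1)}\big|\frac{\partial^{n}}{\partial x_k^{n}}h\big|^{2}\,d\lambda$; averaging over $n\le m$, invoking the previous bound, and applying Cauchy--Schwarz then keep $\frac1m\sum_{n=0}^{m}M_\infty\big(\frac{\partial^{n}}{\partial x_k^{n}}h,m_0\big)$ bounded in $m$. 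Thus the averages $\frac1m\sum_{n=0}^{m}\norm{(\partial/\partial x_k)^{n}h}_{m_0}$ remain bounded for every $m_0$, so the orbit of $h$ cannot be distributionally unbounded (cf.~\cite[Proposition~20]{BBPW18}); in particular $h$ is not $\partial/\partial x_k$-distributionally irregular.

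The main obstacle is the uniform-in-$r$ control of the Bessel-type series appearing in part~(i): it is this estimate that both singles out the exponent $N/2-3/4$ and makes the hypothesis $\varphi(r)\to\infty$ indispensable --- with a constant weight the supremum above would be bounded below by a fixed positive constant, so $\sum_{n\ge1}P_{n,k}(1)$ would fail to converge in $X$, which is consistent with the non-existence asserted in part~(ii). The remaining ingredients are routine: Stirling's formula, orthogonality of homogeneous harmonic polynomials, Lemma~\ref{keyl}, and subharmonicity of $|\frac{\partial^{n}}{\partial x_k^{n}}h|^{2}$.
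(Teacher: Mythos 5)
Your proposal is correct and follows essentially the same route as the paper, with one worthwhile variation at the end of part~(ii). For part~(i) you choose exactly the Banach space $X$ used in the paper and invoke Theorem~\ref{thm:sufficient} with $X_0=Y_0$ the harmonic polynomials and $S=P_{1,k}$; the only difference is that you work out the unconditional convergence of $\sum_{n\ge1}P_{n,k}(1)$ directly from orthogonality, the bound~\eqref{ineq:GrowthEst} with $m=0$, and a Bessel-type estimate (which is in fact Lemma~\ref{lma:BarnesEstimate} with $\alpha=2$, $\beta=N-2$), whereas the paper simply cites this as~\cite[Theorem~4.2(b)]{BBGE10}. Your observation about the role of $\varphi(r)\to\infty$ in squeezing the Cauchy tails is exactly the right way to see why the exponent $N/2-3/4$ is sharp, and it correctly mirrors the non-existence in part~(ii).

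For part~(ii) the first half of your argument (translates $h_a$, the pointwise bound~\eqref{ineq:DHupperbound}, Parseval, Lemma~\ref{ineqharmnorm}, rearrangement and Lemma~\ref{keyl} with $\alpha=2$, $\beta=N-3/2$) coincides with the paper's. Where you genuinely diverge is the final step: the paper averages the Ces\`aro bound over uniformly distributed points $a_i\in S(r)$ and passes to a limit to obtain $\tfrac1l\sum_n M_2^2\big(\tfrac{\partial^n}{\partial x_k^n}h,r\big)\le C$, and then implicitly converts this to a statement about the defining seminorms; you instead exploit subharmonicity of $\big|\tfrac{\partial^n}{\partial x_k^n}h\big|^2$ and the sub-mean value inequality to pass directly from the pointwise Ces\`aro bound on $\overline{B(m_0+1)}$ to boundedness of $\tfrac1m\sum_n M_\infty\big(\tfrac{\partial^n}{\partial x_k^n}h,m_0\big)$, which is precisely the seminorm that appears in the definition of distributional unboundedness on $\harmsp$. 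Your version is slightly more explicit and self-contained on this last point; both arguments are valid, and both ultimately rest on~\cite[Proposition~20]{BBPW18} (or its contrapositive for each fixed seminorm). One small remark: Lemma~\ref{keyl} formally asks for the estimate for all $R>0$, but its proof only integrates over $R\in[m,3m]$ with $m$ large, so your extra care with the compact range of $R$ is safe but not strictly needed.
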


\begin{proof}
\ref{thm:DI01}  Fix $1 \leq k \leq N$ and we  assume without loss of generality that $\inf_{r>0}\varphi (r)>0$.  We consider the  space
\begin{equation*}
Y \coloneqq \left\lbrace  h \in \harmsp \ : \ \lim_{r\to \infty} \frac{ M_2(h ,r)r^{N/2 -3/4}}{\varphi (r)e^r} =0 \right\rbrace
\end{equation*}
endowed with the corresponding sup-norm $\norm{\, \cdot \,}_Y$. Note that  $\left( Y,  \norm{\, \cdot \,}_Y \right)$  is a Banach space that is continuously embedded in $\harmsp$, endowed with the topology of local uniform convergence, and that every $h \in Y$ satisfies the desired growth condition.

We will apply Theorem \ref{thm:sufficient} and to this end we let $Y_0 = Y_1$ be the space of harmonic polynomials on $\R^N$.  The space $Y_0$ is dense in $Y$ and it follows
immediately that part \ref{item:suffA} of Theorem \ref{thm:sufficient} is satisfied.

For part \ref{item:suffB}, we use the antiderivative given in \eqref{defn:primitiveMaps} to define the map $S \colon Y_1 \to Y_1$
\begin{equation*}
	S \colon \sum_{j=0}^m H_j \mapsto \sum_{j=0}^m P_{1,k}(H_j)
\end{equation*}
where $H_j \in \harmpolys{j}$.
For all $H \in Y_1$ we have that $\partial/\partial x_k S H = H$ and since $\sum_{n=1}^\infty \partial^n/\partial x_k^n H$ is a finite sum it converges unconditionally.

To complete the proof it suffices to show that the series $\sum_{n=1}^\infty S^n H = \sum_{n=1}^\infty P_{n,k}(H)$ converges unconditionally in $Y$ for any polynomial $H \in \harmpolys{j}$, $j \geq 0$.
 (This was shown in  \cite[Theorem  4.2(a)]{BBGE10}, but for the convenience of the reader we outline the argument here.)

For a finite subset $F \subset \N$, we have by orthogonality, homogeneity, \eqref{ineq:primUpperBnd} and \eqref{ineq:GrowthEst} that

\begin{align*}
\norm{\sum_{n \in F} P_{n,k}(H)}_Y &= \sup_{r>0} \frac{r^{N/2 -3/4}}{\varphi (r)e^r} M_2 \left( \sum_{n \in F} P_{n,k}(H) ,r \right) \\
&= \sup_{r>0} \frac{r^{N/2 -3/4}}{\varphi (r)e^r} \left(  \sum_{n \in F} M_2^2 ( P_{n,k}(H) ,r)\right)^{1/2} \\
&\leq C \sup_{r>0} \frac{r^{N/2 -3/4}}{\varphi (r)e^r} \left(  \sum_{n \in F}  \frac{r^{2(n+j)}}{(n+j)!^2 (n+j+1)^{N-2}}  \right)^{1/2}.
\end{align*}
By an application of Lemma \ref{lma:BarnesEstimate}, it then follows that $\sum_{n = 0}^\infty P_{n,k}(H)$ converges unconditionally in $Y$.

\noindent\ref{thm:DI02}
Fix $1 \leq k \leq N$.
We consider the translated harmonic function $h_a(x) \coloneqq h(x+a)$ for $a \in \R^N$.
It follows from \eqref{eq:DecompHomoHarmPolys} that $h_a$ has a unique representation of the form
\begin{equation*}
h_a = \sum_{j=0}^\infty H_{a,j}
\end{equation*}
where $H_{a,j} \in \harmpolys{j}$.

 For $a \in S(r)$,   it follows from \eqref{ineq:DHupperbound} that
\begin{equation*}
		\abs{\frac{\partial^n}{\partial x_k^n} h(a) }  = 	\abs{\frac{\partial^n}{\partial x_k^n} h_a(0) }  \leq n! \sqrt{d_n} R^{-n} M_2(H_{a,n}, R) \end{equation*}
for any $R>r$. It then follows from Lemma~\ref{ineqharmnorm} that
\begin{equation*}
	\sum_{n=0}^\infty	\frac{R^{2n}}{n!^2{d_n}}\abs{\frac{\partial^n}{\partial x_k^n} h(a) }^2 \leq \sum_{n=0}^\infty  M_2^2(H_{a,n}, R)
=   M_2^2(h_a, R)\leq C_1 M_2^2(h,r+R)
\end{equation*}
where $C_1$ is a constant that depends only on $N$.


We recall that the sequence $(d_n)_n$  given by \eqref{dim:dm} is increasing and satisfies $d_n = O(n^{N-2})$ as $n \to \infty$.
So  applying our assumption we get that
\begin{equation*}
	\sum_{n=0}^\infty	\frac{R^{2n}}{n!^2{d_n}}\abs{\frac{\partial^n}{\partial x_k^n} h(a) }^2 \leq  C_2 \frac{e^{2(r+R)}}{(r+R)^{N  -3/2}}.
\end{equation*}
That is,
\begin{equation*}\label{ineq:sumha}
	\sum_{n=0}^\infty  \frac{R^{2n+N-3/2}}{n!^2n^{N-2}e^{2R}}\abs{ \frac{\partial^n}{\partial x_k^n} h(a) }^2 \leq C_2
\end{equation*}
for some constant $C_2$ that only depends on $r$ and $N$. By applying Lemma~\ref{keyl} for $\alpha=2$ and $\beta=N-3/2$, we obtain

\begin{equation*}
\frac{1}{l} \sum_{n=0}^l \abs{  \frac{\partial^n}{\partial x_k^n} h(a)}^2 \leq C
\end{equation*}
for every $l\in\N$ and $a\in B(r)$, and for some constant $C$ that only depends on $r$ and $N$. In particular, if we select a finite family $\{a_i \in S(r) \, : \,  i=0,\ldots,m\}$ uniformly distributed on $S(r)$ we get
\begin{equation*}
\frac{1}{m} \sum_{i=0}^m \left( \frac{1}{l} \sum_{n=0}^l \abs{  \frac{\partial^n}{\partial x_k^n} h(a_i)}^2\right) =
\frac{1}{l} \sum_{n=0}^l  \left( \frac{1}{m} \sum_{i=0}^m  \abs{  \frac{\partial^n}{\partial x_k^n} h(a_i)}^2\right) \leq C.
\end{equation*}
Taking limits as $m\to\infty$, we conclude that
\begin{equation*}
\frac{1}{l} \sum_{n=0}^l M_2^2\left(\frac{\partial^n}{\partial x_k^n} h,r\right) \leq C
\end{equation*}
for each $l\in\N$, and it follows that $h$ cannot be a distributionally unbounded harmonic function with respect to the operator $\partial/\partial x_k$.
\end{proof}

A natural further question arising from Theorem \ref{thm:DIgrowth2} is the following.
\begin{qu}
	What are the precise $M_p$-average growth rates of $\partial/\partial x_k$-distributionally irregular harmonic functions for $p \neq 2$?
\end{qu}

\section{Growth of $D^\alpha$-Distributionally Irregular Harmonic Functions} \label{sec:growthHarmonicFnsII}

In this section we study permissible growth rates, in terms of the sup-norm on spheres, of harmonic functions that are distributionally irregular with respect to general partial differentiation operators $D^\alpha$, for $\alpha \in \N^N$.  The hypercyclic case was considered by the investigation of Aldred and Armitage~\cite{AA98b} and the frequently hypercyclic case was studied by Blasco et al.~\cite{BBGE10}.

We begin by recalling some  notation and results from \cite{AA98a, AA98b} which are required in the sequel.  Set $c_2 =1$ and for $N \geq 3$ we define the constants
\begin{equation}  \label{defn:cN}
c_N = N \left( \prod_{j=1}^{N-1} \frac{(2j)^{2j}}{(2j +1)^{2j +1}} \right)^{1/2N}.
\end{equation}
It was shown in \cite[Section 3.2]{AA98b} that for $N \geq 3$
\begin{equation*}
c_N > \sqrt{\frac{N}{2}} \quad \text{ and } \quad c_N = \sqrt{\frac{N}{2}} + o(1), \text{ as } N \to \infty.
\end{equation*}

We also require suitable antiderivatives associated with the partial differentiation operators $D^\alpha$.
Using an inductive construction on the maps from \eqref{defn:primitiveMaps}, in \cite[Lemma 2]{AA98b} they identified linear maps
\begin{equation}  \label{defn:primitiveMapsAlpha}
P_{n,\alpha} \colon \harmpolys{m} \to \harmpolys{m+ n\abs{\alpha}}
\end{equation}
with the property that $D^{n\alpha} P_{n,\alpha} (H) = H$, for $H \in \harmpolys{m}$, where  $m,n \geq 0$.
Here we again use the convention that $D^{n\alpha} = (D^\alpha)^n$.

For our purposes, we do not need to explicitly define the maps $P_{n,\alpha}$, however we note that the different maps $P_{n,\alpha}$ are mutually compatible since for $H \in \harmpolys{m}$ and $\ell, n \geq 0$ we have that
\begin{equation*}
	P_{\ell + n,\alpha} \left( H \right) = P_{\ell,\alpha} \left( P_{n,\alpha} \left( H \right) \right)
\end{equation*}
and in particular it holds that $D^{n\alpha} P_{\ell,\alpha}(H)=P_{\ell-n,\alpha}(H)$ for $\ell>n$.

We will utilise the following estimate, which follows from \cite[Lemma 4]{AA98b}.
For $m, n \in \N$, $\alpha \in \N^N$ and $H \in \harmpolys{m}$,
\begin{equation}  \label{est:supPrimitiveH}
	M_\infty(P_{n,\alpha}(H), r) \leq C \frac{n^A \abs{\alpha}^A (m+1)^{(N-1)/2} (c_N r)^{n\abs{\alpha}}}{(n\abs{\alpha})!} M_\infty(H, r)
\end{equation}
for $r>0$, where $c_N$ is as defined in \eqref{defn:cN}, $A, C >0$ are constants depending only on $N$, and we may assume that $A \in \N$.

It follows from results in \cite{AA98b} that for nonzero $\alpha \in \R^N$, there exists a $D^\alpha$-hypercyclic harmonic function $h \in \harmsp$ such that for any $\varepsilon >0$, there is some $C_\varepsilon >0$ with
\begin{equation} \label{harmHcGrowthGeneralD}
M_\infty(h,r) \leq C_\varepsilon e^{(c_N + \varepsilon)r}.
\end{equation}
It also follows from \cite{AA98b} that  for $\alpha = (1,\dotsc,1)$, there does not exist a $D^\alpha$-hypercyclic $h \in \harmsp$ that satisfies
\begin{equation*}
M_\infty(h,r) \leq C e^{cr},
\end{equation*}
for any $c< \sqrt{N/2}$ and where $C>0$ is a constant.

In \cite{BBGE10} they strengthened growth condition \eqref{harmHcGrowthGeneralD} and extended it to the frequently hypercyclic case. In particular, they showed if $\varphi \colon \R_+ \to \R_+$ is a function such that $\varphi(r)/r^p \to \infty$, as $r \to \infty$,  for any $p \geq 0$, then there exists a $D^\alpha$-frequently hypercyclic harmonic function $h \in \harmsp$ such that
\begin{equation*}
M_\infty(h,r) \leq \varphi(r) e^{c_N r}
\end{equation*}
for $r>0$ sufficiently large.

Since hypercyclic vectors are irregular, we may immediately infer growth estimates for $D^\alpha$-irregular harmonic functions from the above results.	 It is also necessary to mention that Bayart and Ruzsa \cite{BR15} showed that there are frequently hypercyclic operators that are not distributionally chaotic, answering negatively Problem 36 in \cite{BBMP13}. This shows that, in general, we cannot deduce directly the growth estimates for distributional chaos from the corresponding estimates in \cite{BBGE10} for frequent hypercyclicity.
We give initial growth estimates in the distributionally irregular case in the following theorem.

\begin{theorem}  \label{thm:growthDalpha}
Let $N \geq 2$ and $\alpha \in \N^N$ with $\alpha \neq 0$.  Let $\varphi \colon \R_+ \to \R_+$ be a function such that $\varphi(r)/r^p \to \infty$, as $r \to \infty$,  for any $p \geq 0$.
Then there exists a harmonic function $h \in \harmsp$, distributionally irregular with respect to the differentiation operator $D^\alpha$, such that
\begin{equation*}
	M_\infty(h,r) \leq \varphi(r) e^{c_N r}
\end{equation*}
for $r >0$ sufficiently large and where $c_N$ is as given in \eqref{defn:cN}.
\end{theorem}

\begin{proof}
	The proof is similar to  that of Theorem \ref{thm:DIgrowth2}~\ref{thm:DI01}.
	We  assume without loss of generality that $\inf_{r>0}\varphi (r)>0$ and we consider the  space
	\begin{equation*}
	Y \coloneqq \left\lbrace  h \in \harmsp \ : \ \lim_{r\to\infty}  \frac{M_\infty(h,r)}{\varphi (r)e^{c_N r}} =0 \right\rbrace
	\end{equation*}
	endowed with the sup-norm $\norm{\, \cdot \,}_Y$. Note that  $\left( Y,  \norm{\, \cdot \,}_Y \right)$  is a Banach space which is continuously embedded in $\harmsp$ and that every $h \in Y$ satisfies the desired growth
	condition.

	We will apply Theorem \ref{thm:sufficient} and to this end we let $Y_0 = Y_1$ be the space of harmonic polynomials on $\R^N$.  Then the space $Y_0$ is dense in $Y$ and it follows
	immediately that part \ref{item:suffA} of Theorem \ref{thm:sufficient} is satisfied with respect to the operator $D^\alpha$.

	For part \ref{item:suffB}, we define the map $S \colon Y_1 \to Y_1$ by
	\begin{equation*}
	S \colon \sum_{j=0}^m H_j \mapsto \sum_{j=0}^m P_{1,\alpha}(H_j)
	\end{equation*}
	where $H_j \in \harmpolys{j}$ and the antiderivative $P_{1,\alpha}$ is as given in \eqref{defn:primitiveMapsAlpha}.
	For all $H \in Y_1$, note that $D^\alpha S H = H$, and since $\sum_{n=1}^\infty D^{n\alpha} H$ is a finite sum it converges unconditionally.

	To complete the proof it suffices to prove that the series $\sum_{n=1}^\infty S^n H$ converges unconditionally in $Y$ for any polynomial $H \in \harmpolys{m}$. (This calculation appears in \cite[Theorem  4.3]{BBGE10}, but for the convenience of the reader we outline the steps here.)

	Let $F \subset \N$ be finite. If $F \cap \{0, 1,\dotsc , L\} = \varnothing$, then by 	\eqref{est:supPrimitiveH}, the homogeneity of $H$, Lemma \ref{lma:BarnesEstimate}  it follows that
	\begin{align*}
		\norm{\sum_{n \in F} P_{n,\alpha}(H)}_Y
		&\leq \sup_{r>0} \frac{1}{\varphi(r) e^{c_N r}}  \sum_{n=L+1}^\infty C \frac{n^A \abs{\alpha}^A (m+1)^{(N-1)/2} (c_N r)^{n\abs{\alpha}}}{(n\abs{\alpha})!} M_\infty(H, r) \\
		&\leq C \sup_{r>0} \frac{1}{\varphi(r) e^{c_N r}} \sum_{n=L+1}^\infty \frac{n^A (c_N r)^{n\abs{\alpha}}}{(n\abs{\alpha})!} r^m \\
		&\leq C \sup_{r>0} \frac{r^{m+A}}{\varphi(r) e^{c_N r}} \sum_{n=L+1}^\infty \frac{(c_N r)^{n\abs{\alpha} - A}}{(n\abs{\alpha} -A)!} 		 \leq C \sup_{r>0} \frac{r^{m+A}}{\varphi(r)}
	\end{align*}
	where the constants $C>0$ above take different values.
	Using the assumption that $\varphi(r)/r^{m+A} \to \infty$ as $r \to \infty$, it follows that the series $\sum_{n=1}^\infty S^n H$ converges unconditionally in $Y$.

\end{proof}

The preceding discussion and theorem naturally give rise to the following question.
\begin{qu}
	What are the optimal  rates of growth of harmonic functions that are irregular and distributionally irregular with respect to $D^\alpha$?
\end{qu}
The analogous questions remains open for frequent hypercyclicity and even for hypercyclic harmonic functions (cf.~\cite[Section 6]{BBGE10}). Aldred and Armitage~\cite{AA98b} conjecture in the hypercyclic case that the sup-norm is of exponential type $\displaystyle \sqrt{N/2}$.

\section*{Acknowledgements}

This project was initiated during a visit by C. Gilmore to the Universitat Polit\`ecnica de Val\`encia and he wishes to thank the members of IUMPA for their hospitality and
mathematical stimulation during his visit.

We wish to thank Tom Carroll and Stephen Gardiner for helpful suggestions regarding Lemma \ref{ineqharmnorm}. We are also grateful to the anonymous referee for the careful reading and helpful suggestions that  improved considerably the presentation of this text.

%

\begin{thebibliography}{10}
	
	\bibitem{ABMP13}
	A.~A. Albanese, X.~Barrachina, E.~M. Mangino, and A.~Peris.
	\newblock Distributional chaos for strongly continuous semigroups of operators.
	\newblock {\em Commun. Pure Appl. Anal.}, 12(5):2069--2082, 2013.
	
	\bibitem{AA98a}
	M.~P. Aldred and D.~H. Armitage.
	\newblock Harmonic analogues of {G}. {R}. {M}ac{L}ane's universal functions.
	\newblock {\em J. London Math. Soc. (2)}, 57(1):148--156, 1998.
	
	\bibitem{AA98b}
	M.~P. Aldred and D.~H. Armitage.
	\newblock Harmonic analogues of {G}. {R}. {M}ac {L}ane's universal functions.
	{II}.
	\newblock {\em J. Math. Anal. Appl.}, 220(1):382--395, 1998.
	
	\bibitem{AG01}
	D.~H. Armitage and S.~J. Gardiner.
	\newblock {\em Classical potential theory}.
	\newblock Springer Monographs in Mathematics. Springer-Verlag, London, 2001.
	
	\bibitem{ABR01}
	S.~Axler, P.~Bourdon, and W.~Ramey.
	\newblock {\em Harmonic function theory}, volume 137 of {\em Graduate Texts in
		Mathematics}.
	\newblock Springer-Verlag, New York, second edition, 2001.
	
	\bibitem{BG06}
	F.~Bayart and S.~Grivaux.
	\newblock Frequently hypercyclic operators.
	\newblock {\em Trans. Amer. Math. Soc.}, 358(11):5083--5117, 2006.
	
	\bibitem{BM09}
	F.~Bayart and {\'E}.~Matheron.
	\newblock {\em Dynamics of linear operators}, volume 179 of {\em Cambridge
		Tracts in Mathematics}.
	\newblock Cambridge University Press, Cambridge, 2009.
	
	\bibitem{BR15}
	F.~Bayart and I.~Z. Ruzsa.
	\newblock Difference sets and frequently hypercyclic weighted shifts.
	\newblock {\em Ergodic Theory Dynam. Systems}, 35(3):691--709, 2015.
	
	\bibitem{Bea88}
	B.~Beauzamy.
	\newblock {\em Introduction to operator theory and invariant subspaces},
	volume~42 of {\em North-Holland Mathematical Library}.
	\newblock North-Holland Publishing Co., Amsterdam, 1988.
	
	\bibitem{BBMGP11}
	T.~Berm{\'u}dez, A.~Bonilla, F.~Mart{\'\i}nez-Gim{\'e}nez, and A.~Peris.
	\newblock Li-{Y}orke and distributionally chaotic operators.
	\newblock {\em J. Math. Anal. Appl.}, 373(1):83--93, 2011.
	
	\bibitem{BGB16}
	L.~Bernal-Gonz{\'a}lez and A.~Bonilla.
	\newblock Order of growth of distributionally irregular entire functions for
	the differentiation operator.
	\newblock {\em Complex Var. Elliptic Equ.}, 61(8):1176--1186, 2016.
	
	\bibitem{BBP20}
	N.~C. Bernardes, A.~Bonilla, and A.~Peris.
	\newblock Mean {L}i-{Y}orke chaos in {B}anach spaces.
	\newblock {\em J. Funct. Anal.}, 278(3):108343, 31, 2020.
	
	\bibitem{BBMP13}
	N.~C. Bernardes, Jr., A.~Bonilla, V.~M{\"u}ller, and A.~Peris.
	\newblock Distributional chaos for linear operators.
	\newblock {\em J. Funct. Anal.}, 265(9):2143--2163, 2013.
	
	\bibitem{BBMP15}
	N.~C. Bernardes, Jr., A.~Bonilla, V.~M{\"u}ller, and A.~Peris.
	\newblock Li-{Y}orke chaos in linear dynamics.
	\newblock {\em Ergodic Theory Dynam. Systems}, 35(6):1723--1745, 2015.
	
	\bibitem{BBPW18}
	N.~C. Bernardes, Jr., A.~Bonilla, A.~Peris, and X.~Wu.
	\newblock Distributional chaos for operators on {B}anach spaces.
	\newblock {\em J. Math. Anal. Appl.}, 459(2):797--821, 2018.
	
	\bibitem{BBGE10}
	O.~Blasco, A.~Bonilla, and K.-G. Grosse-Erdmann.
	\newblock Rate of growth of frequently hypercyclic functions.
	\newblock {\em Proc. Edinb. Math. Soc. (2)}, 53(1):39--59, 2010.
	
	\bibitem{BB13}
	J.~Bonet and A.~Bonilla.
	\newblock Chaos of the differentiation operator on weighted {B}anach spaces of
	entire functions.
	\newblock {\em Complex Anal. Oper. Theory}, 7(1):33--42, 2013.
	
	\bibitem{CKMM16}
	J.~A. Conejero, M.~Kosti\'c, P.~J. Miana, and M.~Murillo-Arcila.
	\newblock Distributionally chaotic families of operators on {F}r\'echet spaces.
	\newblock {\em Commun. Pure Appl. Anal.}, 15(5):1915--1939, 2016.
	
	\bibitem{DS12}
	D.~Drasin and E.~Saksman.
	\newblock Optimal growth of entire functions frequently hypercyclic for the
	differentiation operator.
	\newblock {\em J. Funct. Anal.}, 263(11):3674--3688, 2012.
	
	\bibitem{DR84}
	S.~M. Du{\u\i}os-Ruis.
	\newblock Universal functions and the structure of the space of entire
	functions.
	\newblock {\em Dokl. Akad. Nauk SSSR}, 279(4):792--795, 1984.
	
	\bibitem{Gil20}
	C.~Gilmore.
	\newblock Linear dynamical systems.
	\newblock {\em Irish Math. Soc. Bull.}, (86):47--77, 2020.
	
	\bibitem{GST19}
	C.~Gilmore, E.~Saksman, and H.-O. Tylli.
	\newblock Optimal growth of harmonic functions frequently hypercyclic for the
	partial differentiation operator.
	\newblock {\em Proc.~Roy.~Soc.~Edinburgh Sect.~A}, 149(6):1577--1594, 2019.
	
	\bibitem{Gro90}
	K.-G. Grosse-Erdmann.
	\newblock On the universal functions of {G}. {R}. {M}ac{L}ane.
	\newblock {\em Complex Variables Theory Appl.}, 15(3):193--196, 1990.
	
	\bibitem{GEP11}
	K.-G. Grosse-Erdmann and A.~Peris~Manguillot.
	\newblock {\em Linear chaos}.
	\newblock Universitext. Springer, London, 2011.
	
	\bibitem{Kat76}
	Y.~Katznelson.
	\newblock {\em An introduction to harmonic analysis}.
	\newblock Dover Publications, New York, second edition, 1976.
	
	\bibitem{Kur71}
	{\"U}.~Kuran.
	\newblock On {B}relot-{C}hoquet axial polynomials.
	\newblock {\em J. London Math. Soc. (2)}, 4:15--26, 1971.
	
	\bibitem{LY75}
	T.~Y. Li and J.~A. Yorke.
	\newblock Period three implies chaos.
	\newblock {\em Amer. Math. Monthly}, 82(10):985--992, 1975.
	
	\bibitem{Mac52}
	G.~R. MacLane.
	\newblock Sequences of derivatives and normal families.
	\newblock {\em J. Analyse Math.}, 2:72--87, 1952.
	
	\bibitem{MGOP09}
	F.~Mart{\'\i}nez-Gim{\'e}nez, P.~Oprocha, and A.~Peris.
	\newblock Distributional chaos for backward shifts.
	\newblock {\em J. Math. Anal. Appl.}, 351(2):607--615, 2009.
	
	\bibitem{MGOP13}
	F.~Mart{\'\i}nez-Gim{\'e}nez, P.~Oprocha, and A.~Peris.
	\newblock Distributional chaos for operators with full scrambled sets.
	\newblock {\em Math. Z.}, 274(1-2):603--612, 2013.
	
	\bibitem{Nik14}
	M.~Nikula.
	\newblock Frequent hypercyclicity of random entire functions for the
	differentiation operator.
	\newblock {\em Complex Anal. Oper. Theory}, 8(7):1455--1474, 2014.
	
	\bibitem{SS94}
	B.~Schweizer and J.~Sm{\'\i}tal.
	\newblock Measures of chaos and a spectral decomposition of dynamical systems
	on the interval.
	\newblock {\em Trans. Amer. Math. Soc.}, 344(2):737--754, 1994.
	
	\bibitem{Shk93}
	S.~A. Shkarin.
	\newblock On the growth of {$D$}-universal functions.
	\newblock {\em Vestnik Moskov. Univ. Ser. I Mat. Mekh.}, (6):80--83 (1994),
	1993.
	
	\bibitem{WC16}
	X.~Wu and G.~Chen.
	\newblock Scrambled sets of shift operators.
	\newblock {\em J. Nonlinear Sci. Appl.}, 9(5):2631--2637, 2016.
	
	\bibitem{WWC17}
	X.~Wu, L.~Wang, and G.~Chen.
	\newblock Weighted backward shift operators with invariant distributionally
	scrambled subsets.
	\newblock {\em Ann. Funct. Anal.}, 8(2):199--210, 2017.
	
	\bibitem{WZ12a}
	X.~Wu and P.~Zhu.
	\newblock Dense chaos and densely chaotic operators.
	\newblock {\em Tsukuba J. Math.}, 36(2):367--375, 2012.
	
	\bibitem{YY18}
	Z.~Yin and Q.~Yang.
	\newblock Distributionally {$n$}-chaotic dynamics for linear operators.
	\newblock {\em Rev. Mat. Complut.}, 31(1):111--129, 2018.
	
\end{thebibliography}

\end{document}